\title{\bf More on the Non-Solvable Graphs and Solvabilizers}
\author{ {\bf B. Akbari} \\[0.2cm]
{\em Department of Mathematics, Sahand
University of Technology,}\\
 {\em P.O. Box 51335-1996, Tabriz, IRAN.}\\[0.2cm]
{\em E-mails}: {\tt b.akbari@sut.ac.ir} and {\tt
b.akbari@dena.kntu.ac.ir}}
\newenvironment{proof}{\noindent {\em {Proof}}.}{$\square$
\medskip}
\newtheorem{definition}{Definition}[section]
\newtheorem{corollary}{Corollary}[section]
\newtheorem{theorem}{Theorem}[section]
\newtheorem{lm}{Lemma}[section]
\newtheorem{problem}{Problem}[section]
\begin{document}
\maketitle
\begin{abstract}

\noindent The non-solvable graph of a finite group $G$, denoted by
${\cal S}_{G}$, is a simple graph whose vertices are the elements
of $G$ and there is an edge between $x, y\in G$ if and only if
$\langle x, y\rangle$ is not solvable. If $R(G)$ is the solvable
radical of $G$, the isolated vertices in ${\cal S}_{G}$ are
exactly the elements of $R(G)$.  So in the case where $G$ is a
non-solvable group, it is wise to consider the induced subgraph of
${\cal S}_{G}$ with respect to $G\setminus R(G)$. This graph is
denoted by $\widehat{{\cal S}_G}$. Let $G$ be a finite group and
$x\in G$. The solvabilizer of $x$ with respect to $G$, denoted by
$Sol_G(x)$, is the set $\{y\in G\ |\ \langle x, y\rangle \ {\rm
is\ solvable}\}$. The purpose of this paper is to study some
properties of the non-solvable graph $\widehat{{\cal S}_G}$ and
the structure of $Sol_G(x)$ for every $x\in G$. We also show that
there is no finite group in which some vertices in its
non-solvable graph have the degree $n-2$ where $n=|G|-|R(G)|$.
\end{abstract}
\renewcommand{\baselinestretch}{1.1}
\def\thefootnote{ \ }
\footnotetext{{\em $2010$ Mathematics Subject Classification}:
20D05, 20D06, 20D08, 20D10, 05C25.\\
{\bf Keywords}: non-solvable graph, sovabilizer, finite group.}
\section{Introduction}
All groups appearing here are assumed to be finite. One of the
most important methods and interesting subjects is to study finite
groups by algebraic properties associated with them. There are a
lot of ways to relate an algebraic property to a finite group. One
of them is to consider some properties of the graphs associated
with it. Each property of the graph can learn us a property of the
group. Let $G$ be a finite group. The {\em non-solvable graph}
${\cal S}_{G}$ is a simple graph that constructs as follows. The
vertex set is $G$ and two distinct elements $x$ and $y$ are
adjacent if and only if the subgroup $\langle x, y\rangle$ is not
solvable.

In fact, Thompson's Theorem asserts that a group $G$ is solvable
if and only if $\langle x, y\rangle$ is solvable for every $x,
y\in G$. Hence, ${\cal S}_{G}$ is an empty graph if and only if
$G$ is solvable. Therefore, we only study ${\cal S}_{G}$ if $G$ is
not solvable.

For two non-empty subsets $A, B$ of $G$, we call $Sol_A(B)$ the
{\em solvabilizer} of $B$ with respect to $A$ which is the subset
$$\{a\in A\ |\ \langle a, b\rangle \ {\rm is\ solvable}\ \forall b\in B\}.$$
Note that $Sol_A(B)$ is not necessarily a subgroup of $G$. We put
$Sol_A(x):=Sol_A(\{x\})$ and $Sol(G):=Sol_G(G)$. Let $R(G)$ be the
solvable radical of $G$. In \cite{guralni}, it was obtained that
$Sol(G)=R(G)$. It is also clear that $Sol_A(x)=Sol_A(\langle
x\rangle)$. We focus our attention on $Sol_G(x)$.

It was shown in \cite{doron} that $Sol_G(x)$ is the union of all
solvable subgroups of $G$ containing $x$. It was also proved that
$Sol_G(x)$ is a disjoint union of some cosets of $ \langle
x\rangle$.

According to above, for every $x\in G$ we have
$${\rm deg}(x)=|G|-|Sol_G(x)|,$$ where ${\rm deg}(x)$ is the
degree of vertex $x$ in ${\cal S}_{G}$.

It is obvious that every two elements of $Sol(G)$ are not adjacent
in ${\cal S}_{G}$. On the other hand, as mentioned before,
$Sol(G)=R(G)$ where $R(G)$ is the solvable radical of $G$, which
means that if $x$ is an element of $G$ such that for every $y\in
G$, $\langle x, y\rangle$ is solvable, then $x\in R(G)$.
Therefore, for all $x\in G\setminus R(G)$, there exists an element
$y\in G\setminus R(G)$ such that $\langle x, y\rangle$ is not
solvable. So we can conclude that the elements of $R(G)$ are
exactly the isolated vertices in ${\cal S}_{G}$. Hence, if $G$ is
a non-solvable group, then it is logical to consider the induced
graph of ${\cal S}_{G}$ with respect to $G\setminus Sol(G)$ which
is denoted by $\widehat{{\cal S}_G}$. It is seen that the degree
of vertex $x\in G\setminus Sol(G)$ in ${\cal S}_{G}$ is equal to
its degree in $\widehat{{\cal S}_G}$.

The non-solvable graph of a group can be generalized in the
following way (see \cite{abdollahi}).

Let $G$ be a finite group. The non-nilpotent graph of $G$, which
is denoted by ${\cal N}_{G}$, is a simple graph whose vertices are
the elements of $G$ and two vertices $x, y$ are adjacent by an
edge if and only if $\langle x, y\rangle$ is not nilpotent. The
induced subgraph of ${\cal N}_{G}$ on $G\setminus nil(G)$, where
$nil(G)=\{ x\in G| \ \langle x, y\rangle {\rm \ is \ nilpotent \
for \ all} \ y\in G\}$, was introduced as $\widehat{{\cal N}_G}$.
This graph was completely verified in \cite{abdollahi}. Clearly,
${\cal S}_{G}$ (resp. $\widehat{{\cal S}_G}$) is a subgraph of
${\cal N}_{G}$ (resp. $\widehat{{\cal N}_G}$).

We are going to focus on non-solvable graph. In fact, we are
interested in finding the structure of a group through some
properties of its non-solvable graph. Many properties of this
graph were studied in \cite{doron}.

Our terminology and notation for groups will be standard. Thus, we
only introduce some notation for graphs used in this paper.

{\em Notation for Graphs.} A {\em simple graph} $\Gamma$ with
vertex set $V=V(\Gamma)$ and edge set $E=E(\Gamma)$ is a graph
with no loops or multiple edges. A graph $\Gamma$ is $k$-regular
if the degrees of all vertices in $\Gamma$ is $k$. A {\em regular
graph} is one that is $k$-regular for some $k$. A $(n-1)$-regular
graph with $n$ vertices is said a {\em complete graph}. A complete
graph with $n$ vertices is denoted by $K_n$. A set of vertices of
a graph is independent if the vertices are pairwise nonadjacent.
The {\em independence number} $\alpha(\Gamma)$ of a graph $\Gamma$
is the cardinality of a largest independent set of $\Gamma$. The
distance between two vertices of a graph is the minimum length of
the paths connecting them. The {\em diameter} of a graph is the
greatest distance between two vertices of the graph. An acyclic
graph is one that contains no cycles. A connected acyclic graph is
called a {\em tree}. A graph is {\em bipartite} if its vertex set
can be partitioned into two subsets $X$ and $Y$ so that every edge
has one end in $X$ and one end in $Y$, such a partition $(X, Y)$
is called a {\em bipartition} of the graph, and $X$ and $Y$ its
{\em parts}. We recall that a graph is bipartite if and only if it
contains no odd cycle (a cycle of odd length). A bipartite graph
with bipartition $(X, Y)$ in which every two vertices from $X$ and
$Y$ are adjacent is called a {\em complete bipartite graph} and
denoted by $K_{|X|, |Y|}$. An edge subdivision operation for an
edge $e\in E$ with two endpoints $u, v\in V$, is the deletion of
$e=uv$ from $\Gamma$ and the addition of two edges $uw$ and $wv$
along with the new vertex $w$. A graph which has been derived from
$\Gamma$ by a sequence of edge subdivision operations is called a
subdivision of $\Gamma$. A graph is {\em planar} if it can be
drawn on the plane without edges crossing except at endpoints. In
fact, a graph is planar if and only if it dose not contain a
subdivision of $K_{3, 3}$ and $K_5$(Kuratowski's Theorem).

In this paper, we are interested in characterizing certain
properties of a group in terms of some properties of  non-solvable graph
and solvabilizers.

In section 3, we study some properties of the induced subgraph
${\cal S}_{G}$ with respect to $G\setminus Sol(G)$($\widehat{{\cal
S}_G}$), for a group $G$.

In section 4, we investigate the structure of the solvabilizer of
$x\in G$ with respect to $G$. We prove that $N_G(\langle
x\rangle)\subseteq Sol_G(x)$ where $N_G(\langle x\rangle)$ is the
normalizer of $\langle x\rangle$ in $G$. In general, we show that
if $H$ is a solvable subgroup of $G$, then for all $x\in H$,
$N_G(H)\subseteq Sol_G(x)$ (Lemma $4. 1$). Furthermore, in
\cite{doron}, it was shown that for $x\in G$, $deg(x)\leqslant
n-2$ where $n=|G|-|Sol(G)|$. In this paper, we prove that
$deg(x)\neq n-2$ and so $deg(x)\leqslant n-3$ for every $x\in G$.
\section{Preliminary Results}
In this section, we express some results obtained in \cite{doron}
which will help us for further investigations. We begin with a
Theorem taken from \cite{guralni}.
\begin{theorem}\label{connected}
Let $G$ be a non-solvable group and $x, y\in G$ such that $x,
y\notin Sol(G)$. Then there exists $z\in G$ such that $\langle x,
z\rangle$ and $\langle y, z\rangle$ are not solvable.
\end{theorem}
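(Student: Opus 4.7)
The plan is to combine a reduction to trivial solvable radical with a covering argument rooted in simple-group theory.

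First I would reduce to the case $R(G)=1$ by passing to $\bar G=G/R(G)$. Since $R(G)$ is solvable, $\langle a,b\rangle\le G$ is solvable if and only if its image $\langle\bar a,\bar b\rangle\le\bar G$ is solvable: solvability passes to quotients, and conversely $\langle a,b\rangle$ is an extension of the solvable group $\langle\bar a,\bar b\rangle$ by the solvable kernel $\langle a,b\rangle\cap R(G)$. Using $Sol(G)=R(G)$ from \cite{guralni}, the hypothesis $x,y\notin Sol(G)$ translates to $\bar x,\bar y\ne 1$ in $\bar G$, and $R(\bar G)=1$. Finding a suitable $z$ in $G$ is then equivalent to finding one in $\bar G$, so I may assume $R(G)=1$ outright.

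Next I would rephrase the goal as $Sol_G(x)\cup Sol_G(y)\ne G$. Since $Sol(G)=1$, each of $Sol_G(x)$ and $Sol_G(y)$ is already a proper subset of $G$, so the content is that they cannot jointly exhaust $G$. I would proceed by induction on $|G|$, analyzing a minimal counterexample. With $R(G)=1$ the socle $N=T_1\times\cdots\times T_k$ is a direct product of non-abelian simple groups, $C_G(N)=1$, and $G$ embeds into $\mathrm{Aut}(N)$. Splitting into cases according to how $x$ and $y$ project onto the simple factors, I would try to construct a common witness $z$ inside a single factor $T_i$ and then verify that it lifts to an element of $G$ that still generates non-solvable subgroups with both $x$ and $y$.

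The hardest part is the simple-group input: for every non-abelian finite simple group $T$ and every pair of non-identity elements $t,t'\in T$, there must exist $z\in T$ such that $\langle t,z\rangle$ and $\langle t',z\rangle$ are both non-solvable. This is a classification-type statement akin to the Guralnick-Wilson characterization of the solvable radical, and, as far as I can see, depends essentially on the Classification of Finite Simple Groups in order to produce such a $z$ uniformly across all simple $T$. The two reductions above are routine; the substance of the theorem lives in this final simple-group assertion, and a naive counting inequality of the form $|Sol_G(x)|+|Sol_G(y)|<|G|+|Sol_G(x)\cap Sol_G(y)|$ is unlikely to suffice without structural input.
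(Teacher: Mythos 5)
There is a genuine gap, and in fact two. First, note that the paper does not prove this statement at all: it is quoted verbatim from \cite{guralni}, and the surrounding discussion only sanity-checks the special case of minimal non-solvable groups via Thompson's N-group classification and the $2$-generation of simple groups. So the relevant comparison is with the proof in \cite{guralni}, which indeed proceeds by reducing to (almost) simple groups and invoking the Classification. Your two reductions (passing to $G/R(G)$ using the equivalence ``$\langle a,b\rangle$ solvable $\iff$ $\langle \bar a,\bar b\rangle$ solvable'', and reformulating the goal as $Sol_G(x)\cup Sol_G(y)\neq G$) are correct and match the spirit of that argument. But the decisive step --- the simple-group statement --- is only postulated, not proved; you yourself concede it ``depends essentially on the Classification'' and give no argument for it. A proof that ends by asserting the hard lemma is a plan, not a proof, so as it stands the proposal does not establish the theorem.

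Second, even the lemma you postulate is too weak for the reduction you sketch. After passing to $R(G)=1$, the elements $x$ and $y$ need not lie in the socle $N=T_1\times\cdots\times T_k$: they may permute the factors $T_i$ and may induce outer automorphisms on them. To ``construct a common witness $z$ inside a single factor $T_i$'' you would need the almost simple version of the statement: for every non-abelian simple $T$ and every pair of nontrivial elements $t,t'\in\mathrm{Aut}(T)$ there is $z\in T$ with $\langle t,z\rangle$ and $\langle t',z\rangle$ non-solvable (together with an argument handling the case where $x$ or $y$ acts on the factors by a nontrivial permutation, where the subgroup generated with an element of one factor must still be shown non-solvable). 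Your formulation with $t,t'\in T$ does not cover these configurations, so the inductive/socle analysis as described would not close even if the stated lemma were granted. Both the strengthened lemma and the diagonal/permutation bookkeeping are exactly where the CFSG-based work in \cite{guralni} is done; without them the argument does not go through.
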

Note that in Theorem \ref{connected}, if $G$ is a non-solvable
group in which any proper subgroup is solvable (equivalently every
maximal subgroup is solvable), then it has the property that $G$
is simple modulo the Frattini subgroup (trivial) and is generated
by two elements. The simple groups occurring are classified by
Thompson (This is a famous result of John Thompson in series of
papers on N-groups). So the Theorem still holds. For example, let
G be the alternating group $A_5$ whose the maximal subgroups are
as follows: the alternating group $A_4$, the dihedral group
$D_{10}$ and the symmetric group $S_3$ (see \cite{atlas}). Then
using the fact that any finite simple group can be generated by
two elements (Steinberg in \cite{steinberg}  proved this for a
Chevalley group and Aschbacher and Guralnick in
\cite{aschbacher-gu} verified it for the sporadic groups), any one
nontrivial element is part of a generating pair and the Theorem
still holds.

As a straightforward result of Theorem \ref{connected}, we have
the non-solvable graph $\widehat{{\cal S}_G}$ is connected and its
diameter is at most $2$. More precisely, it was shown  in
\cite{doron} that the diameter of $\widehat{{\cal S}_G}$ can not
be equal to $1$. So we can state the following Lemma.
\begin{lm}{\rm (\cite{doron})}
Let $G$ be a non-solvable group. Then $diam(\widehat{{\cal
S}_G})=2$.
\end{lm}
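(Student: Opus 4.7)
The plan is to establish the equality $\mathrm{diam}(\widehat{\mathcal{S}_G}) = 2$ by proving the upper and lower bounds separately, each in one step.

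For the upper bound $\mathrm{diam}(\widehat{\mathcal{S}_G}) \leq 2$, I would invoke Theorem \ref{connected} directly. Given any two distinct vertices $x, y \in G \setminus \mathrm{Sol}(G)$, the theorem supplies some $z \in G$ with both $\langle x, z\rangle$ and $\langle y, z\rangle$ non-solvable. The small but important check is that this $z$ is actually a vertex of $\widehat{\mathcal{S}_G}$: if $z$ were in $\mathrm{Sol}(G)$, then by the definition of $\mathrm{Sol}(G)$ the subgroup $\langle x, z\rangle$ would be solvable, a contradiction. Hence $z \in G \setminus \mathrm{Sol}(G)$ and is adjacent to both $x$ and $y$ in $\widehat{\mathcal{S}_G}$, giving $d(x,y) \leq 2$.

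For the lower bound $\mathrm{diam}(\widehat{\mathcal{S}_G}) \geq 2$, I need to exhibit two non-adjacent vertices in $\widehat{\mathcal{S}_G}$. The natural candidate is a pair of the form $(x, x^{-1})$, since $\langle x, x^{-1}\rangle = \langle x\rangle$ is cyclic and therefore solvable. For this to actually yield two distinct vertices in $G \setminus R(G)$, I need an $x \in G \setminus R(G)$ of order at least $3$: then $x^{-1} \neq x$, and since $R(G)$ is a subgroup, $x \notin R(G)$ forces $x^{-1} \notin R(G)$ as well.

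The only real obstacle is guaranteeing such an $x$ exists, and this reduces to a quick structural observation on $G/R(G)$. Suppose every element of $G \setminus R(G)$ has order $2$. Then every non-identity coset $gR(G)$ in $G/R(G)$ satisfies $(gR(G))^2 = R(G)$, so $G/R(G)$ is an elementary abelian $2$-group, in particular solvable. Combined with solvability of $R(G)$, this contradicts $G$ being non-solvable. Hence some $x \in G \setminus R(G)$ has order $\geq 3$, and $(x, x^{-1})$ provides the required non-adjacent pair, completing the proof that the diameter is exactly $2$.
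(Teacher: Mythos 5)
Your proof is correct, and it is worth noting how it relates to what the paper actually does: the paper does not prove this lemma at all, but treats the upper bound $diam(\widehat{{\cal S}_G})\leqslant 2$ as an immediate consequence of Theorem \ref{connected} (exactly your first step, including the observation that the midpoint $z$ cannot lie in $Sol(G)$) and then simply cites \cite{doron} for the fact that the diameter cannot equal $1$. Your contribution is a self-contained argument for that second half: you produce a non-adjacent pair $(x,x^{-1})$ by finding $x\in G\setminus R(G)$ of order at least $3$, which must exist since otherwise $G/R(G)$ would have exponent $2$, hence be elementary abelian and solvable, making $G$ solvable. This quotient argument is clean and uses nothing beyond the solvability of $R(G)$; an alternative route in the same spirit would be to handle the ``all involutions'' case via Lemma \ref{o2} (two involutions generate a dihedral, hence solvable, group), but your version avoids any case split on orders altogether. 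So the upper bound follows the paper's approach, while the lower bound replaces an external citation with an elementary direct proof.
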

If $x, y$ are two elements of a group $G$ with order $2$, then
$\langle x, y\rangle$ is a dihedral group. Indeed, we have the
following Lemma.
\begin{lm}{\rm (\cite{doron})}\label{o2}
Let $G$ be a group and $x, y\in G$ such that $o(x)=o(y)=2$. Then
$\langle x, y\rangle$ is solvable.
\end{lm}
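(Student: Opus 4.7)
The plan is to exhibit $\langle x,y\rangle$ explicitly as a dihedral (or cyclic) group by identifying a normal cyclic subgroup of index at most $2$, after which solvability is immediate.

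First I would set $z:=xy$ and note that $\langle x,y\rangle=\langle x,z\rangle$, since $y=x^{-1}z=xz$. The key calculation is to show that conjugation by $x$ inverts $z$: using $x^2=y^2=1$, we have
\[
xzx^{-1}=x(xy)x=x^{2}yx=yx=(xy)^{-1}=z^{-1}.
\]
Therefore $x$ normalizes $\langle z\rangle$, and trivially $z$ does too, so $\langle z\rangle\trianglelefteq \langle x,y\rangle$.

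Next I would observe that the quotient $\langle x,y\rangle/\langle z\rangle$ is generated by the image of $x$, which has order dividing $2$; hence the quotient is cyclic of order $1$ or $2$. Thus $\langle x,y\rangle$ has a normal series
\[
1\trianglelefteq \langle z\rangle \trianglelefteq \langle x,y\rangle
\]
with cyclic (in particular abelian) factors, so $\langle x,y\rangle$ is solvable.

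There is no real obstacle here: once the inversion identity $xzx^{-1}=z^{-1}$ is noted, solvability follows from the fact that every dihedral (or cyclic) group has an abelian normal series of length at most two. The only subtlety is to be careful about the case where $z$ has infinite or small order, but the argument above works uniformly since it depends only on $\langle z\rangle$ being cyclic.
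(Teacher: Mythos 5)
Your argument is correct and is essentially the paper's own approach: the paper simply notes that two involutions generate a dihedral group (citing the reference), and your computation $xzx^{-1}=z^{-1}$ with $z=xy$, giving the normal cyclic subgroup $\langle z\rangle$ of index at most $2$, is exactly the standard verification of that fact. Nothing further is needed.
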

Now, we collect some results on solvabilizer of an element of $G$
with respect to $G$ obtained in \cite{doron}.
\begin{lm}{\rm (\cite{doron})}\label{sol1}
Let $G$ be a group. Suppose that $N\lhd G$ such that $N\subseteq
Sol(G)$ and $x, g\in G$. Then the following statements hold:
\begin{itemize}
\item[$(1)$] $Sol_{G/N}(xN)=Sol_G(x)/N$; \item[$(2)$]
$Sol_G(gxg^{-1})=gSol_G(x)g^{-1}$; \item[$(3)$] If $A, B\subseteq
G$ are two subsets such that $A\subseteq B$ and $x\in A$ is an
element, then $Sol_A(x)\subseteq Sol_B(x)$.
\end{itemize}
\end{lm}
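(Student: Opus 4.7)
The plan is to treat the three parts in increasing order of difficulty, since (3) and (2) are essentially unwinding of definitions, while (1) requires the input that $N$, being contained in the solvable radical, is itself solvable.

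Part (3) is immediate from the definition: if $a\in Sol_A(x)$, then $a\in A\subseteq B$ and $\langle a,x\rangle$ is solvable, so $a\in Sol_B(x)$. For part (2), the key observation is that conjugation is an automorphism of $G$, so it permutes subgroups and preserves solvability. Using the identity $g\langle x,z\rangle g^{-1}=\langle gxg^{-1},gzg^{-1}\rangle$, one has $y\in Sol_G(gxg^{-1})$ iff $\langle gxg^{-1},y\rangle$ is solvable iff its conjugate $\langle x,g^{-1}yg\rangle$ is solvable iff $g^{-1}yg\in Sol_G(x)$ iff $y\in gSol_G(x)g^{-1}$. This is a symmetric chain of equivalences, so both inclusions come out at once.

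For part (1), I would first verify that $N\subseteq Sol_G(x)$ so that the quotient $Sol_G(x)/N$ is well-defined as a subset of $G/N$; this holds because $N\subseteq Sol(G)$ means every $n\in N$ satisfies $\langle n,w\rangle$ solvable for \emph{every} $w\in G$, in particular for $w=x$. Next, the correspondence theorem sends the subgroup $\langle xN,yN\rangle\le G/N$ to the subgroup $\langle x,y\rangle N\le G$. So $yN\in Sol_{G/N}(xN)$ iff $\langle x,y\rangle N/N$ is solvable. Now I would use two standard facts: (i) a quotient of a solvable group is solvable, giving that $y\in Sol_G(x)$ implies $yN\in Sol_{G/N}(xN)$; and (ii) since $N\subseteq Sol(G)=R(G)$ is solvable, an extension of a solvable group by the solvable group $N$ is solvable, so if $\langle x,y\rangle N/N$ is solvable then $\langle x,y\rangle N$ is solvable, and hence its subgroup $\langle x,y\rangle$ is solvable, yielding $y\in Sol_G(x)$. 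Combining these gives equality.

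The only mildly delicate point is the setup in (1)---ensuring that $N\subseteq Sol_G(x)$ and invoking the solvable-by-solvable extension result---but both follow cleanly from the hypothesis $N\subseteq Sol(G)$; no deeper input is required. The other two parts are formal.
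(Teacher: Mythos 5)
Your proposal is correct. Note that the paper itself gives no proof of this lemma: it is quoted as a preliminary result from \cite{doron}, so there is no argument in the text to compare against. Your treatment is the standard one and is complete: part (3) is definitional, part (2) follows from the identity $g\langle x,z\rangle g^{-1}=\langle gxg^{-1},gzg^{-1}\rangle$ and the fact that conjugation preserves solvability, and part (1) rests on the two correct observations that $\langle xN,yN\rangle=\langle x,y\rangle N/N$ and that $N$, being contained in $Sol(G)=R(G)$ (the identification proved in \cite{guralni} and used throughout the paper), is solvable, so that solvability passes both down to the quotient and back up through the extension $N\lhd \langle x,y\rangle N$. The only point worth making explicit, which you do, is that $N\subseteq Sol_G(x)$, so $Sol_G(x)$ is a union of cosets of $N$ and the set $Sol_G(x)/N$ makes sense; with that, your two inclusions give the stated equality.
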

\begin{lm}{\rm (\cite{doron})}\label{sol}
Let $G$ be a group and $x\in G$. Then we have:
\begin{itemize}
\item[$(1)$] $|Sol_G(x)|$ is divisible by $|Sol(G)|$; \item[$(2)$]
$|Sol_G(x)|$ is divisible by $o(x)$ and $|C_G(x)|$.
\end{itemize}
\end{lm}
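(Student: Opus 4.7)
The plan is to establish each divisibility by finding a subgroup $H \le G$ contained in $Sol_G(x)$ and showing $Sol_G(x)$ is a disjoint union of (one-sided) cosets of $H$; Lagrange then forces $|H|$ to divide $|Sol_G(x)|$. Specifically, I would take $H = R(G)$ for part $(1)$, $H = \langle x \rangle$ for the $o(x)$ divisibility in $(2)$, and $H = C_G(x)$ for the $|C_G(x)|$ divisibility in $(2)$.

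For part $(1)$, I would first check $R(G) \subseteq Sol_G(x)$: for $r \in R(G)$, the image of $\langle x, r \rangle$ in $G/R(G)$ is cyclic (generated by $\bar x$), hence solvable, and $\langle x, r \rangle \cap R(G)$ is solvable, so $\langle x, r \rangle$ itself is solvable. The same mod-$R(G)$ sandwich shows $Sol_G(x)$ is closed under right multiplication by $R(G)$: for $y \in Sol_G(x)$ and $r \in R(G)$, the image of $\langle x, yr \rangle$ in $G/R(G)$ equals that of $\langle x, y \rangle$ (since $\bar r = 1$) and is thus solvable. For the $o(x)$ divisibility I would just invoke the previously cited result from \cite{doron} that $Sol_G(x)$ is a disjoint union of cosets of $\langle x \rangle$. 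For the $|C_G(x)|$ divisibility, the inclusion $C_G(x) \subseteq Sol_G(x)$ is immediate since $\langle x, c \rangle$ is abelian for $c \in C_G(x)$, so what remains is to show $Sol_G(x)$ is a union of left cosets of $C_G(x)$, i.e.\ that $cy \in Sol_G(x)$ whenever $c \in C_G(x)$ and $y \in Sol_G(x)$.

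This last claim is the main obstacle. The naive attempt of enlarging the generating set to $\langle x, y, c \rangle$ fails: in $G = A_5 \times A_5$ with $x = ((12345), 1)$, $y = (1, (12345))$, and $c = (1, (12)(34))$, the subgroup $\langle x, y, c \rangle$ projects onto $A_5$ in the second coordinate and so is not solvable, even though $cy = (1, (135))$ makes $\langle x, cy \rangle$ abelian. So the verification must take place inside $\langle x, cy \rangle$ itself. I would combine Lemma $2.3\,(2)$, which yields $c \langle x, y \rangle c^{-1} = \langle x, cyc^{-1} \rangle$ solvable (so at least $cyc^{-1} \in Sol_G(x)$), with the identity $(cy)^{-1} x (cy) = y^{-1} x y$, valid because $c$ centralizes $x$, which forces $\langle x, cy \rangle$ to contain $\langle x, y^{-1} x y \rangle \le \langle x, y \rangle$. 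After reducing to the case $R(G) = 1$ via part $(1)$ and exploiting the embedding $G \hookrightarrow \mathrm{Aut}(\mathrm{soc}(G))$ in that semisimple setting, I expect the final step of upgrading this conjugation-closure of $Sol_G(x)$ under $C_G(x)$ to multiplication-closure to be the delicate part of the argument.
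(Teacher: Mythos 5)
The paper itself gives no proof of this lemma (it is quoted from \cite{doron}), so your attempt has to stand on its own. Parts of it do: the mod-$R(G)$ argument for $(1)$ is correct and shows $Sol_G(x)$ is a disjoint union of cosets $yR(G)$, and quoting the $\langle x\rangle$-coset decomposition for the $o(x)$-divisibility is legitimate. The genuine gap is the $|C_G(x)|$-divisibility, and it is not just an unfinished step: the strategy itself cannot work, because $Sol_G(x)$ is in general \emph{not} a union of cosets of $C_G(x)$, on either side. Take $G=A_5$, $x=(12)(34)$, $c=(13)(24)\in C_G(x)$, and let $y$ be the $5$-cycle $(5\,1\,3\,4\,2)$; one checks $xyx^{-1}=y^{-1}$, so $\langle x,y\rangle\cong D_{10}$ and $y\in Sol_G(x)$. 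Composing permutations as functions, $cy=(2\,5\,3)$ and $yc=(1\,4\,5)$; in both cases the group generated with $x$ is transitive on the five points and contains elements of orders $2$ and $3$, hence has order divisible by $30$ and must be all of $A_5$. So neither $cy$ nor $yc$ lies in $Sol_G(x)$, although $c$ centralizes $x$ and $y\in Sol_G(x)$. (For this $x$ one has $|Sol_G(x)|=36$, which is indeed divisible by $|C_G(x)|=4$, so the lemma holds here --- but not via a coset partition.)

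Your two auxiliary observations do not bridge this. Conjugation-invariance, $cyc^{-1}\in Sol_G(x)$ for $c\in C_G(x)$, is correct, but it only says the conjugation orbits of $C_G(x)$ on $Sol_G(x)$ have sizes dividing $|C_G(x)|$; since distinct orbits may have different sizes, this does not force $|C_G(x)|$ to divide $|Sol_G(x)|$. The identity $(cy)^{-1}x(cy)=y^{-1}xy$ only places $x^{cy}$ inside $\langle x,y\rangle$ and says nothing about solvability of $\langle x,cy\rangle$ --- in the example above $\langle x,cy\rangle=A_5$ while $x^{cy}=x^{y}\in D_{10}$. So the ``upgrade from conjugation-closure to multiplication-closure'' that you yourself flag as the delicate final step is in fact impossible, and the sketched reduction to $R(G)=1$ plus the socle embedding does not supply a substitute argument (note also that $C_{G/R(G)}(xR(G))$ can be strictly larger than the image of $C_G(x)$, so the divisibility does not simply pass to and from the quotient). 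The $|C_G(x)|$-divisibility needs a genuinely different counting argument, as in \cite{doron}, not a Lagrange-type decomposition of $Sol_G(x)$ into $C_G(x)$-cosets.
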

As mentioned before, ${\rm deg}(x)=|G|-| Sol_G(x)|$ for every
$x\in G$. Thus, it is found form Lemma \ref{sol} $(2)$ that
$|C_G(x)|\mid deg(x)$.
\begin{lm}
Let $G$ be a non-solvable group and $x \in G$. Moreover, let $H$
be a solvable subgroup of $G$. Then the following statements hold:
\begin{itemize}
\item[$(1)$] $Sol_G(x)=Sol_G(x^i)$ where $1\leqslant i\leqslant
o(x)$ and $(i, o(x))=1$. In particular, $deg(x)=deg(x^i)$.
\item[$(2)$] $H\leqslant Sol_G(x)$, for every $x\in H$.
\end{itemize}
\end{lm}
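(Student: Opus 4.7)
For part (1), the plan is to reduce the identity to the elementary observation that cyclic subgroups determined by coprime powers coincide. Concretely, whenever $1\le i\le o(x)$ with $(i,o(x))=1$, we have $\langle x^i\rangle=\langle x\rangle$. The excerpt already records the identity $Sol_A(x)=Sol_A(\langle x\rangle)$, so I would simply chain
\[
Sol_G(x)=Sol_G(\langle x\rangle)=Sol_G(\langle x^i\rangle)=Sol_G(x^i).
\]
Alternatively, one can argue directly: for any $y\in G$, $\langle x,y\rangle=\langle x^i,y\rangle$ since $x$ and $x^i$ generate the same cyclic group, so one side is solvable precisely when the other is. The degree equality $\deg(x)=\deg(x^i)$ then follows from the formula $\deg(x)=|G|-|Sol_G(x)|$ stated in the introduction.

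For part (2), the proof is immediate from the definition of solvability and the definition of $Sol_G(x)$. Let $x\in H$ be arbitrary and take any $y\in H$. Then $\langle x,y\rangle$ is a subgroup of $H$; since $H$ is solvable and subgroups of solvable groups are solvable, $\langle x,y\rangle$ is solvable. Hence $y\in Sol_G(x)$, so $H\subseteq Sol_G(x)$.

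Neither step presents a genuine obstacle; the whole statement is essentially bookkeeping once one recognises the role of the identity $Sol_G(x)=Sol_G(\langle x\rangle)$ for (1) and the hereditary nature of solvability for (2). The only thing worth being careful about is recording both directions of the inclusion in (1), which is automatic from the symmetry of the relation ``$\langle x,y\rangle$ is solvable'' in $x$ (applied to $x$ and $x^i$, noting that $(i,o(x))=1$ guarantees that $x$ is also a power of $x^i$, so the roles of $x$ and $x^i$ are interchangeable).
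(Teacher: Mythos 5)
Your proposal is correct, and both halves of your argument are sound: for (1) the chain $Sol_G(x)=Sol_G(\langle x\rangle)=Sol_G(\langle x^i\rangle)=Sol_G(x^i)$ (or equivalently $\langle x,y\rangle=\langle x^i,y\rangle$ for every $y$) is exactly the right reduction, and the degree statement follows from $\deg(x)=|G|-|Sol_G(x)|$; for (2) the observation that $\langle x,y\rangle\leqslant H$ and that solvability is inherited by subgroups settles everything. The paper itself does not spell any of this out: it disposes of (1) by citing a lemma from the reference on solvabilizers (which encodes the same fact that coprime powers generate the same cyclic subgroup) and of (2) by calling it ``a conclusion of Thompson's Theorem.'' Your route for (2) is in fact the more economical one: Thompson's criterion is a deep result whose nontrivial content is the converse implication (two-generated solvability of all pairs forces solvability of the group), whereas the containment $H\subseteq Sol_G(x)$ only needs the trivial direction, namely that subgroups of a solvable group are solvable --- precisely what you used. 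So your write-up is a self-contained and, for (2), strictly more elementary justification of the same statement; nothing is missing, and the only cosmetic point is that the symbol $\leqslant$ in (2) should be read as set containment, since $Sol_G(x)$ need not be a subgroup, which your argument already respects.
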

\begin{proof}
Part $(1)$ is a straightforward result of Lemma $2. 11$ in
\cite{doron}. Also, part $(2)$ is a conclusion of Thompson's
Theorem.
\end{proof}

In \cite{doron}, the degrees of vertices in the non-solvable graph
were investigated and the following results were found.
\begin{lm}{\rm (\cite{doron})}\label{order&degree}
Let $G$ be a non-solvable group and $x\in G\setminus Sol(G)$.
Moreover, assume that $n=|G|-|Sol(G)|$. Then the following hold:
\begin{itemize}
 \item[$(1)$] $2o(x)\leqslant deg(x)$;
 \item[$(2)$] $5<deg(x)<n-1$;
 \item[$(3)$] $deg(x)$ is not a prime.
\end{itemize}
\end{lm}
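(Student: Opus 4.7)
The plan is to handle $(1)$ via a coset-of-$\langle x\rangle$ analysis of $G\setminus Sol_G(x)$, deduce $(3)$ as a quick corollary, and then prove the two halves of $(2)$ using related ideas. The key observation throughout is that $G\setminus Sol_G(x)$ is a union of cosets of $\langle x\rangle$: whenever $y\notin Sol_G(x)$, every element $yx^{i}$ generates with $x$ the same non-solvable subgroup $\langle x,y\rangle$, hence also lies outside $Sol_G(x)$. In particular $o(x)\mid deg(x)$.

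For $(1)$, I would upgrade $o(x)\mid deg(x)$ to $2o(x)\leq deg(x)$ by contradiction: if $y\langle x\rangle$ were the only coset outside $Sol_G(x)$, then $y^{-1}$ and $xy$ (which again generate with $x$ the same non-solvable subgroup) would both lie in $y\langle x\rangle$, forcing $y^{2}\in\langle x\rangle$ and $y^{-1}xy\in\langle x\rangle$. Then $\langle x\rangle\trianglelefteq\langle x,y\rangle$ and the quotient is cyclic of order at most $2$, making $\langle x,y\rangle$ solvable---a contradiction. Part $(3)$ then drops out: if $deg(x)=p$ were prime, Lemma \ref{sol}$(2)$ together with Lagrange would give $o(x)\mid p$, forcing $o(x)=p$ since $x\neq e$; but then $(1)$ would require $2p\leq p$, which is absurd.

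The upper bound $deg(x)<n-1$ in $(2)$ is equivalent to $|Sol_G(x)|\geq|Sol(G)|+2$. Since $x\in Sol_G(x)\setminus Sol(G)$ and $|Sol(G)|$ divides $|Sol_G(x)|$, we have $|Sol_G(x)|\geq 2|Sol(G)|$, which already suffices when $|Sol(G)|\geq 2$. When $Sol(G)=\{e\}$, divisibility of $|Sol_G(x)|$ by $o(x)$ covers $o(x)\geq 3$; for $o(x)=2$ I would show $G$ must contain a second involution---otherwise $x\in Z(G)$ and $\langle x\rangle$ would be a nontrivial solvable normal subgroup of $G$, contradicting $R(G)=\{e\}$---and such an involution lies in $Sol_G(x)$ by Lemma \ref{o2}.

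For the lower bound $deg(x)>5$, the case $o(x)\geq 3$ is immediate from $(1)$; the main obstacle is $o(x)=2$. Choosing $y$ with $\langle x,y\rangle$ non-solvable (so $o(y)\geq 3$ by Lemma \ref{o2}), I would exhibit the three cosets $y\langle x\rangle$, $y^{-1}\langle x\rangle$, $(xyx^{-1})\langle x\rangle$ as pairwise distinct and all disjoint from $Sol_G(x)$. Equality between any two of them translates into one of the relations $y^{2}\in\langle x\rangle$, $xy=yx$, or $(yx)^{2}\in\langle x\rangle$, and each of these forces $\langle x,y\rangle$ to be cyclic, abelian, or dihedral---hence solvable, contradicting the choice of $y$. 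Three cosets of size $2$ then give $deg(x)\geq 6$.
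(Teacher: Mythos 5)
Your proposal is correct, but there is nothing in the paper to compare it with: Lemma \ref{order&degree} is imported verbatim from \cite{doron} and the paper gives no proof of it, so what you have written is an independent, self-contained derivation rather than a variant of an argument in this text. Your whole proof rests on the single observation that $G\setminus Sol_G(x)$ is a union of left cosets of $\langle x\rangle$ (equivalently, the fact quoted in the introduction from \cite{doron} that $Sol_G(x)$ is a union of cosets of $\langle x\rangle$, which also gives $o(x)\mid deg(x)$ since $deg(x)=|G|-|Sol_G(x)|$), and from there everything is elementary: part $(1)$ by ruling out the one-coset case (the relations $y^{2}\in\langle x\rangle$ and $y^{-1}xy\in\langle x\rangle$ do make $\langle x,y\rangle$ an extension of $\langle x\rangle$ by a cyclic group of order at most $2$, hence solvable), part $(3)$ as an immediate corollary of $(1)$ and $o(x)\mid deg(x)$, the upper bound in $(2)$ from $|Sol(G)|\mid|Sol_G(x)|$ together with $Sol(G)\cup\{x\}\subseteq Sol_G(x)$ and, when $Sol(G)=1$ and $o(x)=2$, the existence of a second involution (a unique involution would be central and give a nontrivial solvable normal subgroup) which lies in $Sol_G(x)$ by Lemma \ref{o2}, and the lower bound for involutions by exhibiting the three pairwise distinct cosets $y\langle x\rangle$, $y^{-1}\langle x\rangle$, $xyx^{-1}\langle x\rangle$ outside $Sol_G(x)$. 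Only bookkeeping remarks: in $(1)$ you should state explicitly that at least one coset lies outside $Sol_G(x)$, which is exactly where $x\notin Sol(G)=R(G)$ enters; and in the $o(x)=2$ analysis the coset equalities do not each literally yield ``cyclic, abelian, or dihedral'' --- e.g.\ $y^{2}\in\langle x\rangle$ splits into $y^{2}=1$ (excluded since $o(y)\geqslant 3$) and $y^{2}=x$ (so $\langle x,y\rangle=\langle y\rangle$), $(yx)^{2}=x$ gives $x=y^{-2}$, and $xyx^{-1}=yx$ gives $x=1$ --- but every subcase does end in a contradiction, so the argument is sound as a whole. Your route only needs Lemmas \ref{sol1}, \ref{sol} and \ref{o2}, which are already quoted in the paper, so it would serve as a complete proof if one did not wish to rely on \cite{doron}.
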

In section 4, we will show that if $G$ is a non-solvable group,
then for all $x\in G$, $deg(x)\neq n-2$. Thus, we can conclude
that $deg(x)\leqslant n-3$ for all $x\in G$.

As mentioned before, for an element $x\in G$, $Sol_G(x)$ needs not
to be a subgroup of $G$ in general. If $G$ is a group in which
$Sol_G(x)\leqslant G$ for all $x\in G$, then $G$ is called an
$S$-group. The structure of an $S$-group was studied in
\cite{doron}. In fact, the following Lemma was proved.
\begin{lm}{\rm (\cite{doron})}\label{sgroup}
Let $G$ be a group. Then $G$ is solvable if and only if $G$ is an
$S$-group.
\end{lm}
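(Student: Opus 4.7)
The ``if'' direction is immediate: if $G$ is solvable then Thompson's Theorem gives $Sol_G(x)=G$ for every $x\in G$, which is trivially a subgroup. All the content lies in the converse, which I plan to prove by minimal counterexample.

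Assume $G$ is a non-solvable $S$-group of minimum order. First I would reduce to $Sol(G)=1$: by Lemma \ref{sol1}(1) and the fact that $Sol(G)\subseteq Sol_G(x)$, the quotient $G/Sol(G)$ is again an $S$-group, and if $Sol(G)\neq 1$ it is a strictly smaller non-solvable $S$-group, contradicting minimality. Next, every subgroup $H\leqslant G$ is itself an $S$-group, since $Sol_H(y)=Sol_G(y)\cap H$ is an intersection of subgroups. Combined with minimality, for any $x\in G\setminus\{1\}$ the subgroup $Sol_G(x)$ is proper (as $x\notin Sol(G)=1$ supplies some $y$ with $\langle x,y\rangle$ non-solvable) and hence solvable. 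Using the result of \cite{doron} that $Sol_G(x)$ is the union of all solvable subgroups containing $x$, I conclude that $Sol_G(x)$ is the \emph{unique} maximal solvable subgroup of $G$ containing $x$.

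This means the maximal solvable subgroups of $G$ partition $G\setminus\{1\}$: for nontrivial $x,y$, $Sol_G(x)=Sol_G(y)$ iff $\langle x,y\rangle$ is solvable, so distinct maximal solvable subgroups intersect trivially. For any such $H$ I would next show $N_G(H)=H$: if $n\in N_G(H)$ then $H\langle n\rangle$ is solvable (solvable-by-cyclic), contains $H$, and so equals $H$ by maximality. Letting $H_1,\dots,H_k$ represent the conjugacy classes of maximal solvable subgroups, each has exactly $[G:H_i]$ conjugates, and the partition yields
\[
|G|-1=\sum_{i=1}^{k}[G:H_i]\bigl(|H_i|-1\bigr).
\]
Dividing by $|G|$ rearranges to $\sum_{i=1}^k 1/|H_i|=k-1+1/|G|$; since each $|H_i|\geqslant 2$ the left-hand side is at most $k/2$, forcing $k<2$ and hence $k=1$. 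But the identity above then collapses to $|H_1|=|G|$, contradicting that $H_1$ is a proper subgroup.

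The main obstacle—and the conceptual heart of the argument—is upgrading the raw hypothesis ``$Sol_G(x)$ is a subgroup'' to the stronger conclusion ``$Sol_G(x)$ is a solvable subgroup''. This is where the minimal-counterexample device is genuinely needed, and it is what makes the partition structure available for the concluding combinatorial count.
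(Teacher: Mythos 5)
Your proof is correct, but note that the paper itself contains no proof of this lemma to compare against: it is quoted verbatim from \cite{doron} as a preliminary result. Judged on its own terms, your minimal-counterexample argument is sound and self-contained. The reduction to $Sol(G)=1$ via Lemma \ref{sol1}$(1)$ is legitimate (one only needs $Sol(G)\subseteq Sol_G(x)$ so that $Sol_G(x)/Sol(G)$ makes sense), the observation $Sol_H(y)=Sol_G(y)\cap H$ correctly passes the $S$-property to subgroups, and minimality then upgrades each proper $Sol_G(x)$ to a solvable subgroup, hence (being the union of all solvable subgroups containing $x$) to the unique maximal solvable subgroup through $x$. From there the maximal solvable subgroups pairwise intersect trivially and cover $G\setminus\{1\}$, they are self-normalizing by the solvable-by-cyclic argument, and your count
$|G|-1=\sum_{i=1}^{k}[G:H_i](|H_i|-1)$, equivalently $\sum_i 1/|H_i|=k-1+1/|G|$, does force $k\leqslant 1$ and then $H_1=G$, a contradiction; the only points worth making explicit are that $k\geqslant 1$ (a nontrivial group has a nontrivial, hence maximal, solvable subgroup) and that each $H_i$ is proper and of order at least $2$, both of which are immediate. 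As a small aesthetic remark, the ``if'' direction needs no appeal to Thompson's Theorem: in a solvable group every two-generated subgroup is solvable, so $Sol_G(x)=G$ trivially. Your route has the merit of proving the cited lemma from first principles (essentially the classical fact that no finite group is partitioned by full conjugacy classes of self-normalizing proper subgroups), whereas the present paper simply outsources it to \cite{doron}.
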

The result obtained in Lemma \ref{sgroup}, states an equivalent
condition for solvability. In other words, $G$ is solvable if and
only if $G$ has the following property: For every $x, y, z\in G$,
if $\langle x, y\rangle$ and $\langle x, z\rangle$ are solvable,
then $\langle x, yz\rangle$ is solvable.

We recall that if a graph contains the complete bipartite graph
$K_{3, 3}$, then it is not planar. In \cite{doron}, the following
Lemma was proved.
\begin{lm}{\rm (\cite{doron})}\label{k4,4}
Let $G$ be a non-solvable group. Then $\widehat{{\cal S}_G}$
contains $K_{4, 4}$ as a subgraph.
\end{lm}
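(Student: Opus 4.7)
The plan is to exhibit a $K_{4,4}$ explicitly, starting from a single non-solvable edge and using the coset structure of the neighbourhoods in $\widehat{\mathcal{S}_G}$. Since $G$ is non-solvable, pick $x,y\in G\setminus R(G)$ with $\langle x,y\rangle$ non-solvable; by Lemma \ref{o2} we may arrange $o(x)\ge 3$. The introduction records that $Sol_G(x)$ is a disjoint union of cosets of $\langle x\rangle$, so its complement $N(x)$ is as well; in particular $\langle x\rangle y\subseteq N(x)$, and by the same computation $y\langle x\rangle\subseteq N(x)$. Moreover Lemma \ref{order&degree}(1) gives $\deg(x)\ge 2o(x)$, so $N(x)$ in fact contains at least two full $\langle x\rangle$-cosets. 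Analogous statements hold for $y$.

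The algebraic engine is the observation that whenever $\gcd(a,o(x))=1$, the subgroup $\langle x^a,\,x^by\rangle$ contains $x$ (because $\langle x^a\rangle=\langle x\rangle$) and hence also $y=x^{-b}(x^by)$, so it contains the non-solvable group $\langle x,y\rangle$ and is itself non-solvable. When $\varphi(o(x))\ge 4$ (equivalently $o(x)\notin\{1,2,3,4,6\}$), this immediately yields $K_{4,4}$: take
\[
X=\{x^{a_1},x^{a_2},x^{a_3},x^{a_4}\},\qquad Y=\{x^{b_1}y,x^{b_2}y,x^{b_3}y,x^{b_4}y\},
\]
with $\gcd(a_i,o(x))=1$ and $b_j$ any four distinct residues modulo $o(x)$. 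The eight elements are distinct since $y\notin\langle x\rangle$ (otherwise $\langle x,y\rangle$ would be cyclic), and every cross-pair is an edge by the observation. Swapping the roles of $x$ and $y$, the same construction closes the case $\varphi(o(y))\ge 4$.

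The main obstacle I anticipate is the residual case $o(x),o(y)\in\{2,3,4,6\}$, where $\varphi$ is too small for the direct argument. Here I would exploit the second $\langle x\rangle$-coset $\langle x\rangle y'\subseteq N(x)$ guaranteed by Lemma \ref{order&degree}(1), together with the analogous second $\langle y\rangle$-coset in $N(y)$, to enlarge the pool from which $X$ and $Y$ are drawn; verifying the sixteen cross-adjacencies then reduces to checking which short words in $x,y,y'$ generate a subgroup containing $\langle x,y\rangle$. If that bookkeeping becomes unwieldy, the cleanest fallback is to pass to a minimal non-solvable subgroup $M\le\langle x,y\rangle$, observe that $M$ is simple so $M\cap R(G)=1$ and hence $\widehat{\mathcal{S}_M}$ embeds as a subgraph of $\widehat{\mathcal{S}_G}$, and verify $K_{4,4}$ in $\widehat{\mathcal{S}_M}$ for each of the finitely many families on Thompson's list of minimal simple groups recalled after Theorem \ref{connected}.
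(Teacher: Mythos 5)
Your first construction is correct and is the heart of a valid proof: if $\gcd(a_i,o(x))=1$ then $\langle x^{a_i},x^{b_j}y\rangle$ contains $x$ and hence $y$, so it contains the non-solvable group $\langle x,y\rangle$; the eight elements are distinct because $y\notin\langle x\rangle$, and every endpoint of an edge automatically lies outside $Sol(G)$, so the $K_{4,4}$ sits inside $\widehat{{\cal S}_G}$. (The paper itself gives no proof of this lemma, quoting it from \cite{doron}, so I judge the proposal on its own merits.) The genuine gap is the residual case $o(x),o(y)\in\{2,3,4,6\}$, which you only sketch. The ``bookkeeping'' route with a second coset is not carried out at all, and the fallback contains a concrete error: a subgroup $M$ that is minimal non-solvable need not be simple --- $SL(2,5)$ has all proper subgroups solvable but has a centre of order $2$; such $M$ is only simple modulo its Frattini subgroup, as the paper itself notes after Theorem \ref{connected}. (That error is not fatal for the embedding step, since any edge of ${\cal S}_M$ is already an edge of $\widehat{{\cal S}_G}$, but it signals that the reduction was not thought through.) More importantly, ``verify $K_{4,4}$ for each family on Thompson's list'' is exactly where the remaining work lives --- $PSL(2,2^p)$, $PSL(2,3^p)$, $PSL(2,p)$, $Sz(2^p)$, $PSL(3,3)$ --- and none of it is done, so as written the proposal does not prove the lemma.

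The irony is that the residual case can be made to vanish by choosing $x$ better, so your main construction alone suffices. Since $G/R(G)$ is non-trivial with trivial solvable radical, Burnside's $p^aq^b$ theorem forces a prime $p\geqslant 5$ to divide $|G/R(G)|$; by Cauchy's theorem pick $\bar{x}\in G/R(G)$ of order $p$ and any preimage $x\in G$. Then $x\notin R(G)$ and $p\mid o(x)$, so $\varphi(o(x))\geqslant p-1\geqslant 4$; since $x\notin Sol(G)=R(G)$ there exists $y$ with $\langle x,y\rangle$ non-solvable, and your sets $X=\{x^{a_1},\dots,x^{a_4}\}$, $Y=\{x^{b_1}y,\dots,x^{b_4}y\}$ give the required $K_{4,4}$. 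Had you opened with this choice of $x$ instead of only invoking Lemma \ref{o2} to get $o(x)\geqslant 3$, the case analysis over small orders, and the appeal to Thompson's classification, would both be unnecessary.
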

Now as a conclusion of Lemma \ref{k4,4} and Kuratowski's Theorem,
we can see that $\widehat{{\cal S}_G}$ is not planar.
\begin{lm}{\rm (\cite{doron})}
Let $G$ be a non-solvable group. Then $\widehat{{\cal S}_G}$ is
irregular.
\end{lm}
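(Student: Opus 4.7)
The strategy is a contradiction argument. Suppose $\widehat{{\cal S}_G}$ is $k$-regular. Since $\deg(x) = |G| - |Sol_G(x)|$, this forces $|Sol_G(x)|$ to equal a single constant $c := |G| - k$ for every $x \in G \setminus Sol(G)$. Lemma \ref{sol}(2) then gives $o(x) \mid c$ and $|C_G(x)| \mid c$ for all such $x$, and in particular $c \le |G|-1$. The goal is to produce non-radical elements whose combined divisibility constraints force $c \ge |G|$.

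First I would reduce to $Sol(G) = 1$. By Lemma \ref{sol1}(1), $Sol_{G/Sol(G)}(x\,Sol(G)) = Sol_G(x)/Sol(G)$, so $k$-regularity of $\widehat{{\cal S}_G}$ descends to regularity of $\widehat{{\cal S}_{G/Sol(G)}}$, and it is enough to treat the quotient. Hence we may assume every nonidentity element $x \in G$ satisfies $o(x) \mid c$ and $|C_G(x)| \mid c$.

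Now, since $G$ is non-solvable, the Feit-Thompson theorem supplies an involution $t \in G$, and Burnside's $p^aq^b$-theorem guarantees an element $s$ of prime order $p \ge 5$. Both $|C_G(t)|$ and $|C_G(s)|$ divide $c$, and $2$, $p$ divide $c$. The motivating case is $G = A_5$, where the centralizer orders $4, 3, 5$ of elements of orders $2, 3, 5$ already force $c$ to be divisible by $\mathrm{lcm}(4,3,5) = 60 = |G|$, contradicting $c \le 59$. The main obstacle is to extend this divisibility squeeze to an arbitrary non-solvable $G$: one must exhibit sufficiently independent element- and centralizer-orders whose least common multiple exceeds $|G|-1$. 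I would realize this by working inside the (nonabelian simple) socle of $G/Sol(G)$, where Cauchy's theorem yields elements of every prime order dividing the socle's order, and where standard lower bounds on centralizers in simple groups (or an appeal to the classification) give the required coprime factors.

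As a backup route I would keep Lemma \ref{order&degree}(1) at hand: choosing $x \in G \setminus Sol(G)$ of maximum order $m$ yields $|Sol_G(x)| \le |G|-2m$. It then suffices to exhibit $y \in G \setminus Sol(G)$ with $|Sol_G(y)| > |G|-2m$. A natural candidate is an involution $y$, since by Lemma \ref{o2} every involution of $G$ lies in $Sol_G(y)$; combined with a counting lower bound on the number of involutions in a non-solvable group, this gives $|Sol_G(y)| \ne |Sol_G(x)|$ and directly contradicts regularity, bypassing the full divisibility analysis.
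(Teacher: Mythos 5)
The paper itself does not prove this lemma; it is quoted from \cite{doron} without proof, so your proposal has to stand on its own. Its skeleton is reasonable: regularity forces $|Sol_G(x)|$ to be a constant $c<|G|$ on $G\setminus Sol(G)$, Lemma \ref{sol}$(2)$ gives $o(x)\mid c$ and $|C_G(x)|\mid c$, and the passage to $G/Sol(G)$ via Lemma \ref{sol1}$(1)$ is legitimate, so one may assume the divisibility holds for every nonidentity element. The gap is in the decisive step: you never actually show that these constraints force $c\geqslant |G|$. Cauchy elements only give $p\mid c$ for each prime $p$, i.e.\ the squarefree part of $|G|$, and ``standard lower bounds on centralizers in simple groups'' cannot help, because a lower bound on $|C_G(x)|$ yields no divisibility statement about $c$; likewise the detour through the socle is misleading, since primes (and prime powers) dividing $|G|$ need not divide the socle's order when outer automorphisms are present. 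What is missing is an argument producing, for every prime $p$ with $p^{a_p}\,\|\,|G|$, an element whose centralizer order is divisible by $p^{a_p}$: take $1\neq z\in Z(P)$ for a Sylow $p$-subgroup $P$, so that $P\leqslant C_G(z)$ and hence $p^{a_p}\mid c$; running over all primes gives $|G|\mid c$, contradicting $c\leqslant |G|-1$. With that observation your main route closes, but as written it is an unproved hope resting on an appeal to the classification.

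Your backup route does not work as sketched. Before the reduction it can fail outright: in $SL_2(q)$, $q$ odd, the unique involution is central, hence lies in $Sol(G)$ and is not even a vertex of $\widehat{{\cal S}_G}$. Even after reducing to $Sol(G)=1$, the inequality you need, namely an involution $y$ with $|Sol_G(y)|>|G|-2m$ where $m$ is the maximal element order, has no support: there is no general lower bound on the number of involutions (or on $|Sol_G(y)|$) of that strength, and already in $PSL_2(7)$ the solvabilizer of an involution is far smaller than $|G|-2m$. So the backup should be discarded and the Sylow-center divisibility argument supplied in the main line.
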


\section{Some Properties of Non-solvable Graphs}
In this section, we are going to investigate some graphic
properties of $\widehat{{\cal S}_G}$.
\begin{lm}
Let $G$ be a non-solvable group. Then $\widehat{{\cal S}_G}$ is
not a tree.
\end{lm}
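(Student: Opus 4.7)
The plan is to invoke Lemma \ref{k4,4} directly. By definition, a tree is a connected acyclic graph, so to show that $\widehat{{\cal S}_G}$ is not a tree it suffices to exhibit a single cycle in it. Since Lemma \ref{k4,4} already guarantees that $\widehat{{\cal S}_G}$ contains $K_{4,4}$ as a subgraph, and $K_{4,4}$ contains the $4$-cycle $x_1 y_1 x_2 y_2 x_1$ (for any two vertices $x_1, x_2$ in one part and $y_1, y_2$ in the other), the non-solvable graph immediately contains a cycle of length four.

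Concretely, I would begin by recalling the connectedness statement following Theorem \ref{connected} (so $\widehat{{\cal S}_G}$ is at least connected, which is a prerequisite for being a tree), then note that by Lemma \ref{k4,4}, $\widehat{{\cal S}_G}$ contains $K_{4,4}$. Picking any four-cycle inside this $K_{4,4}$ produces a cycle in $\widehat{{\cal S}_G}$, contradicting acyclicity.

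I do not expect any real obstacle here: the bulk of the work has already been absorbed into Lemma \ref{k4,4}. An alternative but less efficient route would be to combine the diameter-$2$ result with irregularity (Lemma $2.10$ of the excerpt): any tree of diameter $2$ is a star $K_{1,m}$, whose vertex set has exactly one vertex of degree $m$ and $m$ vertices of degree $1$, and one could then use Lemma \ref{order&degree}(1) (giving $\deg(x) \geq 2o(x) \geq 4$ for every non-identity vertex) to rule out the leaf-degree $1$. However, this path is more roundabout than the one-line argument via $K_{4,4}$, so I would present only the cycle-extraction proof and omit the alternative.
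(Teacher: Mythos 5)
Your argument is correct, but it is a genuinely different route from the paper's. The paper argues by contradiction using degrees: if $\widehat{{\cal S}_G}$ were a tree it would have at least two vertices of degree one (a standard fact about trees), and this is impossible by Lemma \ref{order&degree}$(2)$, which gives $5<\deg(x)$ for every vertex $x\in G\setminus Sol(G)$. You instead extract a $4$-cycle from the copy of $K_{4,4}$ guaranteed by Lemma \ref{k4,4}, contradicting acyclicity; this is perfectly valid (and the preliminary remark about connectedness is not even needed, since a single cycle already rules out being a tree, indeed being a forest). The trade-off: your proof leans on the comparatively heavy Lemma \ref{k4,4}, which is the same ingredient the paper uses for non-planarity, and in return yields the slightly stronger conclusion that $\widehat{{\cal S}_G}$ is not acyclic at all; the paper's proof uses only the elementary degree lower bound and the two-leaves property of trees, so it is lighter in its prerequisites. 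Your sketched alternative (diameter $2$ forces a star, then rule out leaves via $\deg(x)\geqslant 2o(x)$) is closer in spirit to the paper's degree-based argument, though the paper reaches the contradiction more directly from Lemma \ref{order&degree}$(2)$ without invoking the diameter.
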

\begin{proof}
Assume to the contrary that $\widehat{{\cal S}_G}$ is a tree. Then
it contains at least two vertices having degree one (see
\cite{bondy}) which contradicts Lemma \ref{order&degree} $(2)$.
So, $\widehat{{\cal S}_G}$ is not a tree.
\end{proof}

The elements of $Sol(G)$ are exactly the isolated vertices in
${\cal S}_G$. Thus $Sol(G)$ is an independent set of ${\cal S}_G$
and so the independence number of ${\cal S}_G$ is greater or equal
than $|Sol(G)|$. We can also state the following Lemma.
\begin{lm}
Let $G$ be a non-solvable group. Then $\alpha({\cal S}_G)\geqslant
max\{o(x)| x\in G\}$.
\end{lm}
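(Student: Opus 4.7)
The plan is to exhibit, for each $x\in G$, an explicit independent set in ${\cal S}_G$ of cardinality $o(x)$; the maximum over $x$ then furnishes the desired lower bound on $\alpha({\cal S}_G)$.

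The natural candidate is the cyclic subgroup $\langle x\rangle$ itself. First I would note that $\langle x\rangle$ has exactly $o(x)$ elements. The key observation is that $\langle x\rangle$ is cyclic, hence abelian, hence solvable; consequently every two-generated subgroup $\langle y, z\rangle$ with $y, z\in\langle x\rangle$ is contained in $\langle x\rangle$ and therefore solvable. By the definition of ${\cal S}_G$, no two elements of $\langle x\rangle$ are adjacent, so $\langle x\rangle$ is an independent set of ${\cal S}_G$.

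From this one concludes $\alpha({\cal S}_G)\geqslant |\langle x\rangle|=o(x)$ for every $x\in G$, and taking the maximum over $x\in G$ yields $\alpha({\cal S}_G)\geqslant \max\{o(x)\mid x\in G\}$.

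There is no real obstacle here: the argument is essentially a one-line consequence of the fact that cyclic groups are solvable together with the definition of adjacency in ${\cal S}_G$. The only minor point worth mentioning is that the bound is stated for ${\cal S}_G$ (not $\widehat{{\cal S}_G}$), so elements of $\langle x\rangle$ that happen to lie in $Sol(G)$ are still legitimate vertices of ${\cal S}_G$ and may be used in the independent set without issue.
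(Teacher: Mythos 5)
Your argument is correct and is essentially the same as the paper's: both take $\langle x\rangle$ as the independent set, noting that any two of its elements generate a subgroup of the solvable (cyclic) group $\langle x\rangle$, hence are nonadjacent in ${\cal S}_G$, which gives $\alpha({\cal S}_G)\geqslant o(x)$ for every $x$. Nothing further is needed.
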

\begin{proof}
For every element $x\in G$, the set $\langle x\rangle$ is an
independent set because it is clear that for all $1\leqslant i,
j\leqslant o(x)$, $\langle x^i, x^j\rangle\leqslant\langle
x\rangle$ is a solvable subgroup of $G$ and thus $x^i$ and $x^j$
are not adjacent in ${\cal S}_G$. So the proof is complete.
\end{proof}

Let $A$ be an independent set of $\widehat{{\cal S}_G}$. Then it
is easy to see that $A\cup Sol(G)\subseteq Sol_G(x)$ for all $x\in
A$. Moreover, if $A\cup \{1\}$ is a subgroup of $G$, then it is a
solvable subgroup.

Now, we consider the non-solvable graphs of subgroups and quotient
groups of a finite group.
\begin{lm}\label{subgraph}
Let $G$ be a non-solvable group. Let $H$ and $N$ be two subgroups
of $G$ such that $N\lhd G$, $N\subseteq Sol(G)$ and
$Sol(G)\subseteq H$. Then the following statements hold:
\begin{itemize}
 \item[$(1)$] If $x$ and $y$ are joined in $\widehat{{\cal S}_H}$
 for every $x, y\in H$, then $x$ and $y$ are joined in
 $\widehat{{\cal S}_G}$. In other words,
 $\widehat{{\cal S}_H}$ is a subgraph of $\widehat{{\cal S}_G}$.
 \item[$(2)$] For two elements $x, y\notin
Sol(G)$, $xN$ and $yN$ are adjacent in $\widehat{{\cal S}_{G/N}}$
if and only if $x$ and $y$ are adjacent in $\widehat{{\cal S}_G}$.
\end{itemize}
\end{lm}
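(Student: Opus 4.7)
The plan is to reduce both statements to two elementary observations about solvable radicals: first, that $Sol(G)\subseteq H$ (combined with normality of $Sol(G)$ in $G$) forces $Sol(G)\subseteq Sol(H)$, and second, the standard identification $Sol(G/N)=Sol(G)/N$ whenever $N\lhd G$ is contained in $Sol(G)=R(G)$. Once these are in place, the adjacency conditions transfer essentially formally.

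For part $(1)$, I would start by verifying that every vertex of $\widehat{\mathcal{S}_H}$ is a vertex of $\widehat{\mathcal{S}_G}$. If $x\in H\setminus Sol(H)$, then since $Sol(G)$ is a normal solvable subgroup of $G$ lying inside $H$, it is a normal solvable subgroup of $H$ and hence contained in $R(H)=Sol(H)$; therefore $x\notin Sol(G)$, so $x$ is a vertex of $\widehat{\mathcal{S}_G}$. For the edge condition, if $x,y$ are adjacent in $\widehat{\mathcal{S}_H}$ then $\langle x,y\rangle\leq H\leq G$ is non-solvable, so $x,y$ are adjacent in $\widehat{\mathcal{S}_G}$ as well.

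For part $(2)$, I would first record that $Sol(G/N)=Sol(G)/N$: the inclusion $Sol(G)/N\subseteq Sol(G/N)$ is immediate since $Sol(G)/N$ is normal and solvable, and the reverse follows because a preimage in $G$ of a solvable normal subgroup of $G/N$ is normal with solvable quotient and solvable kernel (since $N\subseteq Sol(G)$ is solvable), hence solvable. This, together with the hypothesis $x,y\notin Sol(G)$, guarantees that $xN,yN$ are vertices of $\widehat{\mathcal{S}_{G/N}}$. The adjacency equivalence then reduces to the standard fact that $\langle xN,yN\rangle=\langle x,y\rangle N/N\cong \langle x,y\rangle/(\langle x,y\rangle\cap N)$ is solvable if and only if $\langle x,y\rangle$ is: one direction is inherited by quotients, and the other uses $\langle x,y\rangle\cap N\subseteq N\subseteq Sol(G)$ being solvable, so that a solvable quotient by a solvable normal subgroup is solvable.

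There is no real obstacle here beyond keeping the vertex-set bookkeeping straight; both parts are essentially exercises in how $R(G)$ behaves under passage to overgroups containing it and to quotients by subgroups of it, applied to Thompson's characterization of non-solvability via two-generated subgroups. The only point requiring mild care is ensuring that the vertices we are talking about actually belong to the appropriate induced subgraph, which is precisely why the hypotheses $Sol(G)\subseteq H$ in $(1)$ and $N\subseteq Sol(G)$ in $(2)$ are present.
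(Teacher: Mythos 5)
Your proposal is correct, and for part $(1)$ it coincides with the paper's argument: both reduce to the observation that $Sol(G)$, being a normal solvable subgroup of $G$ contained in $H$, lies in $Sol(H)$, so the vertex set of $\widehat{{\cal S}_H}$ sits inside that of $\widehat{{\cal S}_G}$, and adjacency transfers because $\langle x,y\rangle$ is the same subgroup whether viewed in $H$ or $G$ (you are in fact a bit more explicit than the paper about why $Sol(G)\leqslant Sol(H)$). For part $(2)$ you take a genuinely different, self-contained route: the paper simply invokes its quoted Lemma \ref{sol1}$(1)$, namely $Sol_{G/N}(xN)=Sol_G(x)/N$, so that $yN\notin Sol_{G/N}(xN)$ translates directly into $y\notin Sol_G(x)$, whereas you bypass the solvabilizer machinery and argue from first principles that $\langle xN,yN\rangle=\langle x,y\rangle N/N\cong \langle x,y\rangle/(\langle x,y\rangle\cap N)$, using closure of solvability under quotients in one direction and under extensions by the solvable subgroup $\langle x,y\rangle\cap N\subseteq N$ in the other; you also supply the identity $Sol(G/N)=Sol(G)/N$ to certify that $xN,yN$ really are vertices of $\widehat{{\cal S}_{G/N}}$, a bookkeeping point the paper leaves implicit. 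The paper's proof is shorter because it leans on a cited result; yours is longer but independent of that lemma and makes the vertex-set verification explicit, which is a reasonable trade-off and involves no gap.
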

\begin{proof}
$(1)$ Since $Sol(G)\subseteq H$, so it is seen that
$Sol(G)\leqslant Sol(H)$. Thus if $x, y\notin Sol(H)$, then $x,
y\notin Sol(G)$. The rest of proof is clear.

$(2)$ We only prove the sufficiency. The necessity is similar.
Assume that $xN$ and $yN$ are adjacent in $\widehat{{\cal
S}_{G/N}}$. It follows that $yN\notin Sol_{G/N}(xN)$. Then
considering Lemma \ref{sol1} $(1)$, we obtain that
$$yN\notin Sol_G(x)/N.$$ Hence $y\notin Sol_G(x)$ which implies
that $x$ and $y$ are adjacent in $\widehat{{\cal S}_G}$.
\end{proof}

In a view of Lemma \ref{subgraph}, if $Sol(G)=1$, then the
non-solvable graph of each subgroup of $G$ is a subgraph of
$\widehat{{\cal S}_G}$.
\begin{corollary}
Let $G$ be a non-abelian simple group and $H$ a subgroup of $G$.
Then $\widehat{{\cal S}_H}$ is a subgraph of $\widehat{{\cal
S}_G}$.
\end{corollary}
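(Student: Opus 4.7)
The plan is to derive this directly from Lemma \ref{subgraph}(1) by checking that its hypotheses are automatically satisfied whenever the ambient group $G$ is non-abelian simple. The only substantive observation needed is that the solvable radical of a non-abelian simple group is trivial: indeed, $R(G) \lhd G$ and $R(G) \neq G$ (since $G$ is non-solvable), so simplicity forces $R(G) = 1$. Combined with the identification $Sol(G) = R(G)$ recalled in the introduction, we get $Sol(G) = 1$.

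With $Sol(G) = 1$ in hand, I would verify the hypotheses of Lemma \ref{subgraph}(1) for the pair $(G, H)$. Take $N = 1$; then $N \lhd G$ and $N \subseteq Sol(G)$ trivially, and the condition $Sol(G) \subseteq H$ becomes $1 \subseteq H$, which holds for every subgroup. Thus Lemma \ref{subgraph}(1) applies and yields that $\widehat{{\cal S}_H}$ is a subgraph of $\widehat{{\cal S}_G}$ whenever $H$ is itself non-solvable (so that $\widehat{{\cal S}_H}$ is actually defined in the sense of the paper).

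The one remaining case to address is when $H$ happens to be solvable. In that situation $Sol(H) = H$, so the vertex set $H \setminus Sol(H)$ of $\widehat{{\cal S}_H}$ is empty and the graph is vacuously a subgraph of anything. I would mention this briefly so the statement covers all subgroups $H \leqslant G$, not only the non-solvable ones.

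There is essentially no obstacle: the entire content is the reduction $R(G) = 1$ for non-abelian simple $G$, after which the corollary is a one-line application of Lemma \ref{subgraph}(1). I expect the write-up to be only a couple of sentences, the main care being to phrase it so that the trivial case of a solvable subgroup $H$ is not overlooked.
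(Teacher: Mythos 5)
Your proposal is correct and follows exactly the route the paper intends: the paper's remark preceding the corollary (that $Sol(G)=1$ makes Lemma \ref{subgraph}(1) apply to every subgroup) is the whole argument, and the paper's own proof is just ``obvious.'' Your extra sentence handling solvable $H$ (empty vertex set) is a harmless and reasonable clarification, not a divergence.
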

\begin{proof}
The proof is obvious.
\end{proof}
\begin{lm}
Let $G$ be a non-solvable group. Let $H$ be a proper subgroup of
$G$ and $N$ a normal subgroup of $G$ such that $N\subseteq Sol(G)$
and $Sol(G)\subseteq H$. Then the following statements hold:
\begin{itemize}
 \item[$(1)$] $\widehat{{\cal S}_H}$ is not isomorphic to
$\widehat{{\cal S}_G}$.
 \item[$(2)$] $\widehat{{\cal S}_{G/N}}$ is not isomorphic to
$\widehat{{\cal S}_G}$.
\end{itemize}
\end{lm}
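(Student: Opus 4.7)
The plan is to prove both parts by a vertex-count argument rather than by a subtle graph-theoretic invariant: in each case I will exhibit that the two graphs have different numbers of vertices, which immediately rules out any isomorphism.

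For part~(1), I first dispose of the case where $H$ is solvable: then $Sol(H)=H$, so $\widehat{{\cal S}_H}$ has no vertices at all, whereas $\widehat{{\cal S}_G}$ has vertices since $G$ is non-solvable. Otherwise I use the observation already exploited in the proof of Lemma~\ref{subgraph}: because $Sol(G) \leq H$ is a normal solvable subgroup of $H$, it is contained in the solvable radical of $H$, so $Sol(G) \subseteq Sol(H)$. Combined with $|H|<|G|$ this gives
\[
|V(\widehat{{\cal S}_H})| \;=\; |H|-|Sol(H)| \;\leq\; |H|-|Sol(G)| \;<\; |G|-|Sol(G)| \;=\; |V(\widehat{{\cal S}_G})|,
\]
which forbids any isomorphism between the two graphs.

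For part~(2) (interesting only when $N\neq 1$, since the $N=1$ case makes the two graphs literally equal), the strategy is first to establish the identity $Sol(G/N)=Sol(G)/N$ and then compare cardinalities. Assuming this identity,
\[
|V(\widehat{{\cal S}_{G/N}})| \;=\; |G/N|-|Sol(G/N)| \;=\; \frac{|G|-|Sol(G)|}{|N|} \;<\; |G|-|Sol(G)| \;=\; |V(\widehat{{\cal S}_G})|,
\]
again precluding an isomorphism.

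The one step requiring argument rather than counting is the equality $Sol(G/N)=Sol(G)/N$, and it is the only place where I expect any potential obstacle. I would verify it element-wise via Lemma~\ref{sol1}(1): for $xN\in G/N$, one has $xN\in Sol(G/N)$ iff $Sol_{G/N}(xN)=G/N$; by Lemma~\ref{sol1}(1) the left side is $Sol_G(x)/N$, so the condition rewrites as $Sol_G(x)=G$, i.e.\ $x\in Sol(G)$. Once this is in hand, the two cardinality computations above close out the proof immediately, with no further calculations needed.
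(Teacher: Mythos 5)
Your proof is correct and follows essentially the same route as the paper: both parts are settled by comparing vertex counts, namely $|H|-|Sol(H)|$ versus $|G|-|Sol(G)|$ in part (1) and $\bigl(|G|-|Sol(G)|\bigr)/|N|$ versus $|G|-|Sol(G)|$ in part (2). Your write-up is in fact slightly more careful than the paper's, since you justify the identity $Sol(G/N)=Sol(G)/N$ (used tacitly in the paper via Lemma \ref{sol1}(1)) and you correctly note that part (2) needs $N\neq 1$, a hypothesis the statement omits.
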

\begin{proof}
$(1)$ By contrast, assume that $\widehat{{\cal S}_H}\cong
\widehat{{\cal S}_G}$.  Thus the vertex set of $\widehat{{\cal
S}_H}$ coincides with one of $\widehat{{\cal S}_G}$. It follows
that
$$|H|-|Sol(H)|=|G|-|Sol(G)|.$$ We observe that $Sol(G)\leqslant
Sol(H)$. If $Sol(G)=Sol(H)$, then $|G|=|H|$ which is impossible.
Hence, $Sol(G)\lvertneqq Sol(H)$. It implies that
$$|Sol(G)|\leqslant \frac{1}{2} |Sol(H)|.$$ So we can conclude
that
$$|G|\leqslant |H|-2|Sol(G)|+|Sol(G)|=|H|-|Sol(G)|,$$
which is false. Therefore, $\widehat{{\cal S}_H}$ is not
isomorphic to $\widehat{{\cal S}_G}$.

$(2)$ By contrast, suppose that $\widehat{{\cal S}_{G/N}}\cong
\widehat{{\cal S}_G}$. It forces that the vertex set of
$\widehat{{\cal S}_{G/N}}$ coincides with one of $\widehat{{\cal
S}_G}$. Thus we have
$$|G|-|Sol(G)|=\frac{|G|}{|N|}-\frac{|Sol(G)|}{|N|},$$
which is a contradiction. So the proof is complete.
\end{proof}
\section{The Structure of Solvabilizers and Non-solvable Graphs with Certain Degrees of Vertices}
In this section, we consider the structure of solvabilizer
$Sol_G(x)$ for every $x\in G$. We also study non-solvable graphs
whose some vertices have certain degree. Finally, we state a
problem on characterization of finite groups by solvabilizers.
\begin{theorem}\label{Ng(x)}
Let $G$ be a non-solvable group and $x$ an element of $G$. Then
$N_G(\langle x\rangle)\subseteq Sol_G(x)$. In particular, if $x,
y\in G\setminus Sol(G)$ are two elements such that $y\in
N_G(\langle x\rangle)$, then $y$ is not adjacent to $x$ in
$\widehat{{\cal S}_G}$.
\end{theorem}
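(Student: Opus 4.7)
The plan is to show directly that for any $y\in N_G(\langle x\rangle)$, the subgroup $\langle x,y\rangle$ is solvable; then $y\in Sol_G(x)$ follows by definition, and the ``In particular'' clause is immediate from the definition of adjacency in the non-solvable graph.

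First I would fix $y\in N_G(\langle x\rangle)$ and observe that $\langle x\rangle$ is normalized by both generators of $\langle x,y\rangle$: certainly $x\langle x\rangle x^{-1}=\langle x\rangle$, and by hypothesis $y\langle x\rangle y^{-1}=\langle x\rangle$. Hence $\langle x\rangle$ is a normal subgroup of $\langle x,y\rangle$. This is the conceptual step that makes everything work.

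Next I would exploit the short exact sequence
\[
1\longrightarrow \langle x\rangle \longrightarrow \langle x,y\rangle \longrightarrow \langle x,y\rangle/\langle x\rangle \longrightarrow 1.
\]
The kernel $\langle x\rangle$ is cyclic, hence abelian, hence solvable. The quotient $\langle x,y\rangle/\langle x\rangle$ is generated by the single coset $y\langle x\rangle$, hence cyclic, hence solvable as well. Since an extension of a solvable group by a solvable group is solvable, $\langle x,y\rangle$ is solvable, so $y\in Sol_G(x)$. This establishes the inclusion $N_G(\langle x\rangle)\subseteq Sol_G(x)$.

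For the second assertion, if $x,y\in G\setminus Sol(G)$ with $y\in N_G(\langle x\rangle)$, then by what was just shown $y\in Sol_G(x)$, meaning $\langle x,y\rangle$ is solvable; by the defining condition of $\mathcal{S}_G$ (and therefore of the induced subgraph $\widehat{\mathcal{S}_G}$), $x$ and $y$ are non-adjacent. I do not anticipate any real obstacle: the entire argument rests on the single observation that $\langle x\rangle$ is normal in $\langle x,y\rangle$ whenever $y$ normalizes $\langle x\rangle$, after which standard extension closure for solvability finishes the job.
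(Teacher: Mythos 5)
Your proof is correct and takes essentially the same route as the paper: both arguments hinge on observing that $\langle x\rangle\lhd\langle x,y\rangle$, that the quotient $\langle x,y\rangle/\langle x\rangle$ is cyclic (the paper phrases this via $\langle x,y\rangle=\langle x\rangle\langle y\rangle$ and an image of $\langle y\rangle$), and that solvability is closed under extensions. Your version of identifying the quotient as generated by the coset $y\langle x\rangle$ is in fact slightly cleaner than the paper's statement $\langle x,y\rangle/\langle x\rangle\cong\langle y\rangle$, which should really be a quotient of $\langle y\rangle$.
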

\begin{proof}
Suppose that $y\in N_G(\langle x\rangle)$. Thus $\langle
y\rangle\subseteq N_G(\langle x\rangle)$ which yields that
$\langle x\rangle\langle y\rangle$ is a subgroup of $G$. It
follows that $\langle x, y\rangle=\langle x\rangle\langle
y\rangle$. Moreover, it is easy to see that $\langle x\rangle\lhd
\langle x, y\rangle$. We observe that $\langle x\rangle$ and
$\langle x, y\rangle/\langle x\rangle\cong\langle y\rangle$ are
solvable. So we can conclude that $\langle x, y\rangle$ is
solvable. Therefore, $y\in Sol_G(x)$. The rest of proof is
obvious.
\end{proof}

As an important result on solvable groups, Thompson's Theorem
states that a group $G$ is solvable if and only if $\langle x,
y\rangle$ is solvable for every $x, y\in G$. Indeed, Theorem
\ref{Ng(x)} confirms the following fact.
\begin{corollary}
Let $G$ be a group. If all cyclic subgroups of $G$ are normal
subgroups in $G$, then $G$ is solvable.
\end{corollary}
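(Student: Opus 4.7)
The plan is to combine the containment $N_G(\langle x\rangle)\subseteq Sol_G(x)$ from Theorem \ref{Ng(x)} with Thompson's Theorem, which was recalled at the beginning of the paper. The hypothesis ``every cyclic subgroup is normal'' is exactly saying that $N_G(\langle x\rangle)=G$ for every $x\in G$, which is the strongest possible way to exploit Theorem \ref{Ng(x)}.

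Concretely, I would argue by contradiction: suppose that $G$ is non-solvable. Then Theorem \ref{Ng(x)} applies, and for any $x\in G$ the hypothesis gives
\[
G \;=\; N_G(\langle x\rangle) \;\subseteq\; Sol_G(x) \;\subseteq\; G,
\]
so $Sol_G(x)=G$. Unwinding the definition of the solvabilizer, this says that $\langle x,y\rangle$ is solvable for every $y\in G$. Since $x\in G$ was arbitrary, $\langle x,y\rangle$ is solvable for every pair $x,y\in G$, and Thompson's Theorem then forces $G$ to be solvable, contradicting our assumption.

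There is essentially no obstacle here, since the work has already been done in Theorem \ref{Ng(x)}; the only small wrinkle is that Theorem \ref{Ng(x)} is stated for non-solvable $G$, which is precisely why the argument is naturally organised by contradiction (in the solvable case there is nothing to prove). An equivalent reformulation, which I would mention for conceptual clarity, is that a group in which all cyclic subgroups are normal has the property that every two-generator subgroup $\langle x,y\rangle$ is (meta)cyclic and in particular solvable, so that Thompson's Theorem applies directly; but in the context of this paper the cleanest route is via the solvabilizer containment already established.
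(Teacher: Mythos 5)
Your proposal is correct and matches the paper's intent: the paper gives no separate proof, stating only that Theorem \ref{Ng(x)} (combined with Thompson's Theorem) confirms the fact, which is exactly your argument that $N_G(\langle x\rangle)=G$ forces $Sol_G(x)=G$ for every $x$, hence all two-generated subgroups are solvable. Your handling of the hypothesis in Theorem \ref{Ng(x)} by arguing by contradiction is a reasonable way to deal with the fact that it is stated only for non-solvable $G$.
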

Before proceeding our study, we define certain subgroups of a
group.
\begin{definition}
Let $G$ be a group. A local subgroup of $G$ is a subgroup $K$ of
$G$ if there is a nontrivial solvable subgroup $H$ of $G$ such
that $K=N_G(H)$.
\end{definition}
When we are considering the solvibilizers of the elements
belonging to the solvable subgroups of a finite group, we can
generalize Theorem \ref{Ng(x)} to the following Lemma.
\begin{lm}
Let $G$ be a group and $K=N_G(H)$ a local subgroup of $G$ for some
solvable subgroup $H$ of $G$. Then for every $x\in H$, we have
$K\subseteq Sol_G(x)$.
\end{lm}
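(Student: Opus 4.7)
The plan is to mimic the argument used for Theorem \ref{Ng(x)}, but with the solvable cyclic subgroup $\langle x\rangle$ replaced by the general solvable subgroup $H$. The conclusion is that $\langle x,k\rangle$ lies inside the solvable group $H\langle k\rangle$, so it is itself solvable, which puts $k$ into $Sol_G(x)$.

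First I would fix an arbitrary element $k\in K=N_G(H)$ and an arbitrary $x\in H$. Since $k$ normalizes $H$, every power of $k$ does so as well, hence $\langle k\rangle\subseteq N_G(H)$, and therefore $H\langle k\rangle$ is a subgroup of $G$ (this is the same product-of-subgroups step that appears in the proof of Theorem \ref{Ng(x)}). Because $x\in H$ and $k\in\langle k\rangle$, we immediately get
\[
\langle x,k\rangle\;\leqslant\;H\langle k\rangle.
\]

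Next I would argue that $H\langle k\rangle$ is solvable. Since $\langle k\rangle\subseteq N_G(H)$, $H$ is normal in $H\langle k\rangle$. The quotient satisfies
\[
H\langle k\rangle/H\;\cong\;\langle k\rangle/(H\cap\langle k\rangle),
\]
which is cyclic and hence solvable, while $H$ is solvable by hypothesis. Being an extension of a solvable group by a solvable group, $H\langle k\rangle$ is solvable. Consequently its subgroup $\langle x,k\rangle$ is solvable, which by definition means $k\in Sol_G(x)$. Since $k\in K$ was arbitrary, $K\subseteq Sol_G(x)$.

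I do not foresee any real obstacle; the content of the statement is essentially the observation that Theorem \ref{Ng(x)} depended only on the solvability of $\langle x\rangle$ and on the normality of $\langle x\rangle$ in $\langle x,y\rangle$, both of which are replaced here by the hypothesis that $H$ is solvable and normalized by $k$. The only point worth being careful about is that we really do use $x\in H$ (not just $x\in K$) to guarantee the containment $\langle x,k\rangle\leqslant H\langle k\rangle$; if $x$ were only in $K$, the same argument would fail because we would have no reason to expect $\langle x,k\rangle$ to sit inside any solvable overgroup.
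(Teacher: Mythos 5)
Your argument is correct and is essentially the same as the paper's own proof: both form the subgroup $H\langle k\rangle$ (the paper writes $\langle y\rangle H$), show it is solvable via $H\langle k\rangle/H\cong\langle k\rangle/(H\cap\langle k\rangle)$ together with the solvability of $H$, and then observe $\langle x,k\rangle\leqslant H\langle k\rangle$ to conclude $k\in Sol_G(x)$. No gaps; nothing further to add.
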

\begin{proof}
Suppose that $y\in K$. It is seen that $\langle y\rangle H$ is a
subgroup of $G$. We also have $$\frac{\langle y\rangle H}{H}\cong
\frac{\langle y\rangle}{H\cap\langle y\rangle}.$$ Since
$\frac{\langle y\rangle}{H\cap\langle y\rangle}$ and $H$ are
solvable, so $\langle y\rangle H$ is solvable. On the other hand,
we observe that for every $x\in H$, $\langle x, y\rangle\leqslant
\langle y\rangle H$. It follows that $\langle x, y\rangle$ is
solvable and hence $y\in Sol_G(x)$. Therefore, the proof is
complete.
\end{proof}


\begin{theorem}\label{n-2}
Let $G$ be a non-solvable group and $n=|G|-|Sol(G)|$. Then there
is no element $x\in G\setminus Sol(G)$ such that $deg(x)=n-2$.
\end{theorem}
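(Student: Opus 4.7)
The plan is to suppose $\deg(x) = n-2$ and derive a contradiction. Since $\deg(x) = |G| - |Sol_G(x)|$ and $n = |G| - |Sol(G)|$, this hypothesis rewrites as $|Sol_G(x)| = |Sol(G)| + 2$. Lemma \ref{sol}(1) forces $|Sol(G)|$ to divide $2$, leaving only two cases: $|Sol(G)| = 1$ or $|Sol(G)| = 2$, which I would handle separately.

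In the case $|Sol(G)| = 2$, the radical $R(G) = Sol(G)$ has order $2$ and is therefore central in $G$ (since $\mathrm{Aut}(\mathbb{Z}_2)$ is trivial). Because $o(x)$ divides $|Sol_G(x)| = 4$ by Lemma \ref{sol}(2) and $x \notin R(G)$, the image $\bar{x}$ in $\bar{G} = G/R(G)$ has order exactly $2$. Lemma \ref{sol1}(1) then gives $Sol_{\bar{G}}(\bar{x}) = Sol_G(x)/R(G)$, a set of size $2$, so $Sol_{\bar{G}}(\bar{x}) = \langle \bar{x}\rangle$. Applying Lemma \ref{o2} to $\bar{x}$ and any involution $\bar{y} \in \bar{G}$ forces $\bar{y} \in \langle \bar{x}\rangle$, so $\bar{x}$ is the unique involution of $\bar{G}$, hence central. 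But then $\langle \bar{x}\rangle$ is a nontrivial normal solvable subgroup of $\bar{G}$, contradicting $R(\bar{G}) = R(G/R(G)) = 1$.

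In the case $|Sol(G)| = 1$, we have $|Sol_G(x)| = 3$, forcing $o(x) = 3$ and $Sol_G(x) = \langle x\rangle$. Because $Sol_G(x)$ is the union of all solvable subgroups of $G$ containing $x$, every such subgroup equals $\langle x\rangle$; in particular $P := \langle x\rangle$ is itself a Sylow $3$-subgroup of $G$. Theorem \ref{Ng(x)} then gives $N_G(P) \subseteq Sol_G(x) = P$, and Lemma \ref{sol}(2) gives $|C_G(x)| \mid 3$, so $C_G(P) = N_G(P) = P$. Burnside's normal $p$-complement theorem yields a normal complement $K \lhd G$ of order coprime to $3$, and $K$ must be non-solvable since $G/K \cong P$ is solvable. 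Now $x$ acts by conjugation on $K$ with $C_K(x) = K \cap C_G(x) = K \cap \langle x\rangle = 1$, so $x$ is a fixed-point-free automorphism of $K$ of prime order $3$. Thompson's theorem on fixed-point-free automorphisms of prime order then forces $K$ to be nilpotent, contradicting the non-solvability of $K$.

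The main obstacle is Case 2: it requires invoking two substantial external results, namely Burnside's transfer theorem to extract a normal $3$-complement, and Thompson's 1959 theorem on fixed-point-free automorphisms of prime order to finish. Case 1, by comparison, reduces cleanly to the semisimple quotient $G/R(G)$ once Lemmas \ref{sol1}(1) and \ref{o2} are combined.
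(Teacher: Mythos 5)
Your proof is correct, but it follows a genuinely different route from the paper's in both cases. For $|Sol(G)|=2$, the paper splits into $o(x)=2$ and $o(x)=4$ and argues inside $G$ itself: the involution case uses a conjugacy/counting argument together with Lytkina's classification of groups whose elements have order at most $4$, and the order-$4$ case uses Frattini arguments and an analysis of whether $G/\langle x^2\rangle$ is simple. You instead pass immediately to $\bar G=G/R(G)$, use Lemma \ref{sol1}(1) to get $|Sol_{\bar G}(\bar x)|=2$, invoke Lemma \ref{o2} to make $\bar x$ the unique (hence central) involution of $\bar G$, and contradict $R(\bar G)=1$; this is considerably shorter and avoids Lytkina's theorem entirely. (One small ordering point: the claim that $o(\bar x)=2$ does not follow just from $o(x)\mid 4$ and $x\notin R(G)$ -- if $o(x)=4$ and $x^2\notin R(G)$ then $\bar x$ would have order $4$; the correct deduction is the one you effectively make a sentence later, namely $\langle\bar x\rangle\subseteq Sol_{\bar G}(\bar x)$, $|Sol_{\bar G}(\bar x)|=2$ and $\bar x\neq 1$, which yields $o(\bar x)=2$ and $Sol_{\bar G}(\bar x)=\langle\bar x\rangle$ simultaneously.) For $|Sol(G)|=1$, the paper distinguishes the non-simple case (Frattini argument producing a solvable overgroup $P\langle x\rangle$ of $\langle x\rangle$) from the simple case, where it appeals to the classification of finite simple groups and ATLAS/maximal-subgroup tables to exhibit a solvable subgroup properly containing $\langle x\rangle$. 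You avoid the classification altogether: from $N_G(\langle x\rangle)=C_G(\langle x\rangle)=\langle x\rangle$ Burnside's normal $p$-complement theorem gives a normal $3$-complement $K$, which is non-solvable, and $x$ acts on it fixed-point-freely (as $C_G(x)=\langle x\rangle$ and $3\nmid|K|$), so Thompson's theorem on fixed-point-free automorphisms of prime order makes $K$ nilpotent -- a contradiction. Your version trades the paper's CFSG-dependent case check for two classical results (Burnside transfer and Thompson 1959), which yields a classification-free and uniform argument; the paper's version, by contrast, stays closer to elementary Frattini-type manipulations at the cost of a case analysis over simple groups.
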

\begin{proof}
Suppose to the contrary that $x$ an element of $G$ with
$deg(x)=n-2$.

It is good to mention that $deg(x)=|G|-|Sol_G(x)|$. Thus
$$|Sol_G(x)|=|Sol(G)|+2.$$
According to Lemma \ref{sol} $(1)$, $|Sol_G(x)|$ is divisible by
$|Sol(G)|$ which forces that $|Sol(G)|=1$ or $2$. First of all, we
claim that $|Sol(G)|\neq 2$.

Assume that $|Sol(G)|=2$. Then we can see that $|Sol_G(x)|=4$. We
find from Lemma \ref{sol} $(2)$ that $o(x)\mid |Sol_G(x)|$ which
follows that $o(x)=2$ or $4$. Now, we examine these cases
separately.

{\em Case $1$.} First let $o(x)=2$. If $x$ is the only element of
$G$ with $o(x)=2$, then for every $g\in G$, $o(g^{-1}xg)=o(x)$
which yields that $x=g^{-1}xg$. It implies that $x\in
Z(G)\subseteq Sol(G)$ which is false. Therefore, there exists
$y\in G$, $y\neq x$, with $o(y)=2$. According to Lemma \ref{o2},
$\langle x, y\rangle$ is solvable and thus $y\in Sol_G(x)$. So we
can conclude that $G$ has at most three elements of order $2$,
namely, $x$, $y_1$ and $y_2$.

We claim that there exists two elements $g_1, g_2\in G$ such that
$y_i=g_i^{-1}xg_i$. For this purpose, we assume to the contrary
that for every $g_1, g_2\in G$, $g_1^{-1}xg_1=g_2^{-1}xg_2$. It
implies that $g_1g_2^{-1}\in C_G(x)$. On the other hand, we have
$$C_G(x)\leqslant C_G(\langle x\rangle)\leqslant N_G(\langle x\rangle)\subseteq Sol_G(x).$$
Therefore, for any $g_1, g_2\in G$, $o(g_1g_2^{-1})=2$ or $4$. If
$o(g_1g_2^{-1})=2$ for all $g_1, g_2\in G$, then $G$ is an
elementary group which forces that $G$ is nilpotent. This is a
contradiction. Assume now that $o(g_1g_2^{-1})=4$ for all $g_1,
g_2\in G$. Then every nontrivial element of $G$ has order $2$ or
$4$. In \cite{Lytkina}, the structure of a group with elements of
order at most $4$ was completely determined. In fact, we use
Theorem $1$ in \cite{Lytkina} and gain a contradiction.
Consequently, there exist two elements $g_1, g_2\in G$ such that
$y_i=g_i^{-1}xg_i$.

Since $|Sol(G)|=2$, thus $Sol(G)=\langle y_1\rangle$ or
$Sol(G)=\langle y_2\rangle$. On the other hand, for every $g\in
G$, we have $g^{-1}Sol(G)g\subseteq Sol(G)$ because $Sol(G)\lhd
G$. It follows that $x\in Sol(G)$ which is false.

{\em Case $2$.} Let $o(x)=4$. 
We obtain from Lemma \ref{Ng(x)} that $$\langle x\rangle\subseteq
N_G(\langle x\rangle)\subseteq Sol_G(x).$$ Since $|\langle
x\rangle|=|Sol_G(x)|$, hence $$Sol_G(x)=\langle x\rangle=\{1, x,
x^2, x^3\}.$$ clearly, $x, x^3\notin Sol(G)$.

We claim that $x^2\notin Sol(G)$. Suppose to the contrary that
$x^2\in Sol(G)$. Then, according to the order of $Sol(G)$, we have
$Sol(G)=\langle x^2\rangle$. In the sequel, for the sake of
simplicity of the notation, we put $K:=Sol(G)$. Now, it follows
from Lemma \ref{sol1} that
$$Sol_{G/K}(xK)=\frac{Sol_G(x)}{K}.$$
On the other hand, $Sol_G(x)/K$ is a subgroup of $G/K$ with order
$2$. It implies that $Sol_{G/K}(xK)=\langle xK\rangle$. We show
that $G/K$ is an abelian simple group. To do this, we suppose that
$G/K$ is not simple. Therefore, there exists a nontrivial normal
subgroup $N/K$ of $G/K$. Assume first that $xK\in N/K$. Since
$Sol_{G/K}(xK)$ is the union of all solvable subgroups of $G/K$
containing $xK$ and $Sol_{G/K}(xK)=\langle xK\rangle$, so it is
seen that $xK$ is a sylow $2$-subgroup of $G/K$. Now, we use
Frattini's argument and obtain that $G/K=N_{G/K}(\langle
xK\rangle)N/K$. Moreover,
$$\langle xK\rangle\subseteq N_{G/K}(\langle xK\rangle)\subseteq
Sol_{G/K}(xK)$$ which yields that $N_{G/K}(\langle
xK\rangle)=\langle xK\rangle$. Thus $G/K=N/K$, that is impossible.
Therefore, we may suppose that $xK\notin N/K$. It is clear that
there exists a prime $r$ dividing $N/K$. If $R/K$ is a sylow
$r$-subgroup of $N/K$, then we can see from Frattini's argument
that
$$G/K=N_{G/K}(R/K) N/K.$$ By assumption, we have $xK\notin N/K$.
Note that $o(xK)=2$ and so we can not write $xK$ as product of two
nontrivial elements $g_1K\in N_{G/K}(R/K)$ and $g_2K\in N/K$. It
forces that $xK\in N_{G/K}(R/K)$. It implies that $\langle
xK\rangle R/K$ is a solvable subgroup of $G/K$ containing $\langle
xK\rangle$ while $\langle xK\rangle$ is the largest solvable
subgroup of $G/K$ having element $xK$. So we derive a
contradiction. It follows that $G/K$ is a simple group.

As before, $\langle xK \rangle$ is a sylow $2$-subgroup of $G/K$
with order $2$. It forces that $G/K$ is not a non-abelian simple
group. It follows that $|G/K|=2$ and so $|G|=4$ which is false.

We conclude that $x^2\notin Sol(G)$. Consequently, $x^2, x^3$ are
not adjacent to $x$. Hence, $deg(x)\leqslant n-3$ that is
impossible.

We deduce that $|Sol(G)|=1$. As mentioned above,
$|Sol_G(x)|=|Sol(G)|+2$ and thus $|Sol_G(x)|=3$. In a view of
Lemma \ref{sol} $(2)$, $|Sol_G(x)|$ is divisible by $o(x)$ and
hence $o(x)=3$. We observe that $$\langle x\rangle\subseteq
N_G(\langle x\rangle)\subseteq Sol_G(x).$$ By a similar way, we
get that $Sol_G(x)=\langle x\rangle$. So we conclude
$$\langle x\rangle=N_G(\langle x\rangle)=Sol_G(x).$$ It follows
that there is no solvable subgroup of $G$ containing $x$ except
for $\langle x\rangle$. Clearly, if $R$ is a sylow $3$-subgroup of
$G$, then $|R|=3$. To gain a contradiction, we will try to find a
solvable subgroup of $G$ containing $x$ distinct from $\langle
x\rangle$.

Assume first that $G$ is not a simple group. Then it has a
nontrivial normal subgroup, say $N$. Suppose that $x\in N$. Thus,
we obtain from Frattini's argument that $$G=N_G(\langle x\rangle
)N=\langle x\rangle N=N,$$ which is false. It implies that
$x\notin N$. Since $N\neq 1$, so there exists a prime $p$ dividing
$|N|$. Let $P$ be a sylow $p$-subgroup of $N$. Again, by
Frattini's argument, we find $G=N_G(P)N$. According to assumption,
we have $x\notin N$. Since $o(x)=3$, hence we can not write $x$ as
product of two nontrivial elements $g_1\in N_G(P)$ and $g_2\in N$.
It forces that $x\in N_G(P)$. Therefore, $P\langle x\rangle$ is a
solvable subgroup which is desired.

Next, suppose that $G$ is a non-abelian simple group. Considering
the classification of finite groups, the possibilities for simple
group $G$ are as follows:
\begin{itemize}
 \item[$(1)$] an alternating group $\Bbb A_n$ on $n\geqslant 5$
 letters;
 \item[$(2)$] one of the $26$ sporadic groups;
  \item[$(3)$] a simple group of Lie type.
\end{itemize}
It is worth to mention that the order of a  sylow $3$-subgroup of
$G$ is $3$.

If $G$ is an alternating or sporadic group, then according to the
order of these groups, $G$ is one of groups $\Bbb A_5$ and $J_1$.
It is seen from \cite{atlas} that if $x\in \Bbb A_5$ (resp.
 $J_1$) with $o(x)=3$, then $x$ is included in some solvable
 subgroups of $\Bbb A_5$ distinct from $\langle
x\rangle$(resp. $J_1$).

 Let now $G$ be a simple group of Lie type. Using the
 orders of Lie type groups, it is enough to examine the following
 groups:
\begin{itemize}
\item the projective special linear groups $A_1(3)$ and $A_1(q)$
defined over a field of characteristic $p$;  \item $A_2(q)$ where
$3\mid {q+1}$ and $9\nmid {q+1}$; \item the unitary group
$^2A_2(q)$ where $3\mid {q-1}$ and $9\nmid {q-1}$.
\end{itemize}
It is good to note that the structure of all subgroups of $A_1(q)$
are determined in \cite{suzuki}. Moreover, using Tables $8. 3$,
$8. 5$ in \cite{baray}, we can find the maximal subgroups of
$A_2(q)$ and $^2A_2(q)$. Thus, it is easily seen that if $x$ is an
element of one of these groups with $o(x)=3$, then $x$ is included
in some solvable subgroups distinct from $\langle x\rangle$.

Therefore, the proof is complete.
\end{proof}

For a finite group $G$, we define
${\rm Ord}(Sol_G)=\{|Sol_G(x)| \ | \ x\in G\}$. Now, it can be asked the following question.
\begin{problem}
Let $G$ and $H$ be two finite groups. If ${\rm Ord}(Sol_G)$
coincides with ${\rm Ord}(Sol_H)$, then is $G$ isomorphic to $H$?
\end{problem}

\end{document}